\DeclareDocumentCommand \addpic{O{0.4\textwidth} m g}{\parpic[r]{%
\begin{minipage}{#1}
    \includegraphics[width=\textwidth]{#2}%
    \IfNoValueTF{#3}{}{\captionof{figure}{\footnotesize #3}}
\end{minipage}
}}
\newtheorem{theo}{Theorem}
\newtheorem{defn}[theo]{Definition}
\newtheorem{lem} [theo]{Lemma}
\newtheorem{cor}[theo]{Corollary}
\newtheorem{prop}[theo]{Proposition}
\newtheorem{rem}[theo]{Remark}
\makeatletter \@addtoreset{equation}{section}
\def\n{\mathfrak{n}}
\def\bD{\overline{D}}
\def \k {{\mathbf{k}}}
\def\oD{\overline{D}}
\def\VIB{\texttt{VIB}}
\def\HIB{\texttt{HIB}}
\def\HPath{\texttt{HPath}}
\def\VPath{\texttt{VPath}}
\def\type{\textrm{type}}
\title{Inverting the General Order Sweep Map}
\author{Ying Wang$^{1}$, Guoce Xin$^2$ and Yingrui Zhang$^{3,*}$}
\address{ $^1$ School of Mathematics and Statistics, North China University of Water Resources and Electric Power, Zhengzhou 450046, PR China }
\address{ $^{2}$School of Mathematical Sciences, Capital Normal University,
 Beijing 100048, PR China }
\address{$^3$KLMM, Academy of Mathematics and Systems Science Chinese Academy of Sciences, Beijing 100190, PR China}
\email{$^1$\texttt{wangying2019@ncwu.edu.cn}\ \  \&\small $^2$\texttt{guoce\_xin@163.com} \& $^3$\texttt{zyrzuhe@126.com} }
\date{    \today }
\thanks{This work was partially supported by NSFC(12071311).}
\thanks{*Corresponding author.}
\begin{document}

\begin{abstract}
 Building upon the foundational work of Thomas and Williams on the modular sweep map, Garsia and Xin have developed a straightforward algorithm for the inversion of the sweep map on rational $(m,n)$-Dyck paths, where $(m,n)$ represents coprime pairs of integers. Our research reveals that their innovative approach readily generalizes to encompass a broader spectrum of Dyck paths. To this end, we introduce a family of Order sweep maps applicable to general Dyck paths, which are differentiated by their respective sweep orders at level $0$. We demonstrate that each of these Order sweep maps constitutes a bijective transformation. Our findings encapsulate the sweep maps for both general Dyck paths and their incomplete counterparts as specific instances within this more extensive framework.
\end{abstract}

\maketitle

\noindent
\begin{small}
 \emph{Mathematic subject classification}: Primary 05A19; Secondary 05A99 05E05.
\end{small}

\noindent
\begin{small}
\emph{Keywords}: general Dyck paths; Order sweep map; path diagrams.
\end{small}


\section{Introduction}
The study of sweep maps has been a vibrant area of research over the past few decades, with a plethora of variations and extensions being uncovered. See \cite{sweepmap}, it has been revealed that certain classical bijections are, in fact, disguised forms of the sweep map. A central challenge within this field has been the question of invertibility of the sweep map. While bijectivity has been established in several special cases, including the Fuss case where $m = kn \pm 1$, as demonstrated in \cite{Loehr-higher-qtCatalan}, ~\cite{Gorsky-Mazin2}, and ~\cite{Fuss-case}, the problem of inverting the sweep map has proven. Notably, Xin and Zhang introduced a linear algorithm for inverting the sweep map for $\k^{\pm}$-Dyck paths, as well as for $\k$-Dyck paths in \cite{Xin-Zhang}. Despite these advances, the invertibility of the sweep map for rational Dyck paths remained an open question for more than a decade.
In a seminal work, Thomas and Williams provided a general proof of the invertibility for a class of sweep maps, which they termed the modular sweep map. This breakthrough inspired Garsia and Xin to develop a geometric construction for inverting the sweep map on $(m,n)$-rational Dyck paths, where $(m,n)$ is a co-prime pair of positive integers. Their method introduced the concept of a balanced path diagram with a strictly increasing rank sequence, a construct that proved instrumental in the inversion process. Our research indicates that the insights provided by Garsia and Xin can be seamlessly generalized to apply to general Dyck paths, with the rank sequence of the balanced path diagram now exhibiting a weakly increasing trend.

In this paper, we introduce a family of Order sweep maps applicable to general paths, differentiated by their respective sweep orders at level $0$. We establish the bijection of the Order sweep map for general Dyck paths, which encompasses the sweep maps for both complete and incomplete general Dyck paths as specific instances within this more extensive framework.

Firstly, we expand upon the notation introduced in \cite{Rational-Invert}.
A path diagram $T$ of size $N$ is defined as a pair of integer sequences $T = T(P, R)$, where $P = (b_1, b_2, \dots, b_N)$ is called the \emph{path sequence}, and $R = (r_1, r_2, \dots, r_N)$ is called the \emph{rank sequence}. Visually, $T$ is represented by a sequence $(A_1, \dots, A_N)$ of $N$ arrows positioned on the $\mathbb{Z}\times \mathbb{Z}$ lattice. Each arrow $A_i$ is a $\mathbb{Z}$-vector $(1, b_i)$ starting at the lattice point $(i, r_i)$. An arrow is classified as an \emph{up vector} and is colored red if $b_i > 0$, a \emph{down vector} and colored blue if $b_i < 0$, and a \emph{level vector} and colored purple if $b_i = 0$. An illustrative example is provided in Figure \ref{fig:TableauExa}, where $P = (2, 2, 2, 0, -1, 3, 0, -4, -4)$ and $R = (1, 4, 0, 3, 2, 4, 6, 4, 5)$.

\begin{figure}[!ht]
  $$
\vcenter{ \includegraphics[height= 1.82 in]{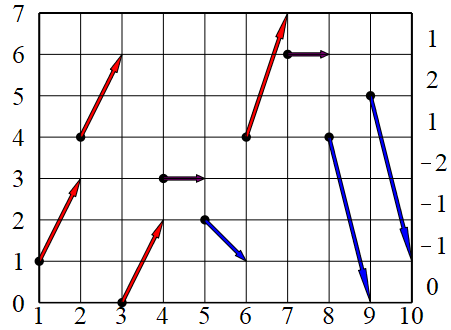}}
$$
\caption{A path diagram of size $7$.
\label{fig:TableauExa}}
\end{figure}

On the left of each horizontal lattice line, or \emph{line} for short, we have placed its $y$ coordinate which we will simply refer to as its \emph{level}.
The row of lattice cells delimited by the lines at levels $i$ and $i+1$ will be called  \emph{row $i$} or the $i$-th row. The level of the starting point of an arrow is called its \emph{starting rank}, and similarly its \emph{end rank} is the level of its end point.

Notice that each lattice cell may contain a segment of a red arrow or a segment of a blue arrow or no segment at all. Purple arrows contain no segments.
The red segment count of row $j$ will be denoted $c^r(j)$ and the blue segment count
is denoted $c^b(j)$. We will set $c(j)=c^r(j)-c^b(j)$ and refer to it as the \emph{count of row $j$}.
We also call the total segment count of a region is equal to its red segment count minus its blue segment count.
In Figure \ref{fig:TableauExa} on the right of each row we have attached its row count. It will be convenient to say that a path diagram $T(P,R)$ is \emph{balanced} if all its row counts are equal to $0$.

The following observation will be crucial in our development. Its proof is similar to \cite[Lemma $2$]{Rational-Invert}. However when we investigate the contribution to the difference $c(j) - c(j-1)$ for a
single arrow $A$, there is one more case to be considered.
\begin{lem}\label{l-DifferenceLevelCount}
Let $T(P,R)$ be any path diagram. It holds for every integer $j$
that
\begin{align}
  c(j)-c(j-1)= \# \{\text{arrows starting at level } j\} -\#\{\text{arrows end at level } j\}.
\end{align}
\end{lem}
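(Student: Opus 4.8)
The plan is to use the additivity of both sides of the identity over the individual arrows of $T$. For a single arrow $A_i$, let $c_i(j)$ denote the signed count it contributes to row $j$, namely $+1$ for each red segment and $-1$ for each blue segment of $A_i$ lying in row $j$; then $c(j)=\sum_{i=1}^N c_i(j)$, and hence $c(j)-c(j-1)=\sum_{i=1}^N\big(c_i(j)-c_i(j-1)\big)$. The right-hand side of the lemma is, in the same way, the sum over $i$ of $[A_i\text{ starts at level }j]-[A_i\text{ ends at level }j]$, where $[\cdot]$ denotes the Iverson bracket. So it is enough to show, for one arrow $A=A_i$ and every integer $j$, that $c_i(j)-c_i(j-1)=[A\text{ starts at level }j]-[A\text{ ends at level }j]$.

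Next I would record exactly which rows a single arrow occupies. Let $A=(1,b)$ be based at $(i,r)$, with starting rank $s:=r$ and end rank $e:=r+b$. If $b>0$ (red), then $A$ places exactly one segment in each of the rows $s,s+1,\dots,e-1$ and in no other row, so $c_i(j)=[s\le j\le e-1]$. If $b<0$ (blue), then $A$ places one blue segment in each of the rows $e,e+1,\dots,s-1$, so $c_i(j)=-[e\le j\le s-1]$. If $b=0$ (purple), then $A$ contains no segment at all and $c_i(j)=0$ for every $j$; this last possibility is precisely the extra case flagged before the statement, since it does not arise for the rational Dyck paths treated in \cite{Rational-Invert}.

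The remaining step is a one-line telescoping in each case. For a red arrow ($s<e$) one has $c_i(j)-c_i(j-1)=[s\le j\le e-1]-[s+1\le j\le e]=[j=s]-[j=e]$, which matches the right-hand side since $A$ starts at level $s$ and ends at level $e$. For a blue arrow ($e<s$) one has $c_i(j)-c_i(j-1)=-[e\le j\le s-1]+[e+1\le j\le s]=[j=s]-[j=e]$, and again $A$ starts at $s$ and ends at $e$. For a purple arrow both sides vanish: the left side because $c_i\equiv 0$, the right side because $s=e$. Summing over $i$ then gives the lemma. The only point requiring care --- and this is the ``one more case'' the text mentions --- is the off-by-one bookkeeping of the half-open row ranges (an up step from level $s$ to level $e$ fills rows $s,\dots,e-1$, not $s,\dots,e$) together with the observation that a step's two boundary levels $j=s$ and $j=e$ coincide exactly when the step is a level step, in which case their contributions cancel; I expect no deeper obstacle.
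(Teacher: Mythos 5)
Your proof is correct and follows essentially the same route as the paper: both reduce to the contribution of a single arrow and check that it equals $[j=s]-[j=e]$, with the purple (level) arrow supplying the extra case absent from the rational setting. The only difference is presentational --- you make the case check algebraic via Iverson brackets and telescoping, where the paper disposes of the zero-contribution cases in words and relegates the $\pm 1$ cases to a figure.
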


\begin{figure}[!ht]
  $$
\vcenter{ \includegraphics[height= 1.52 in]{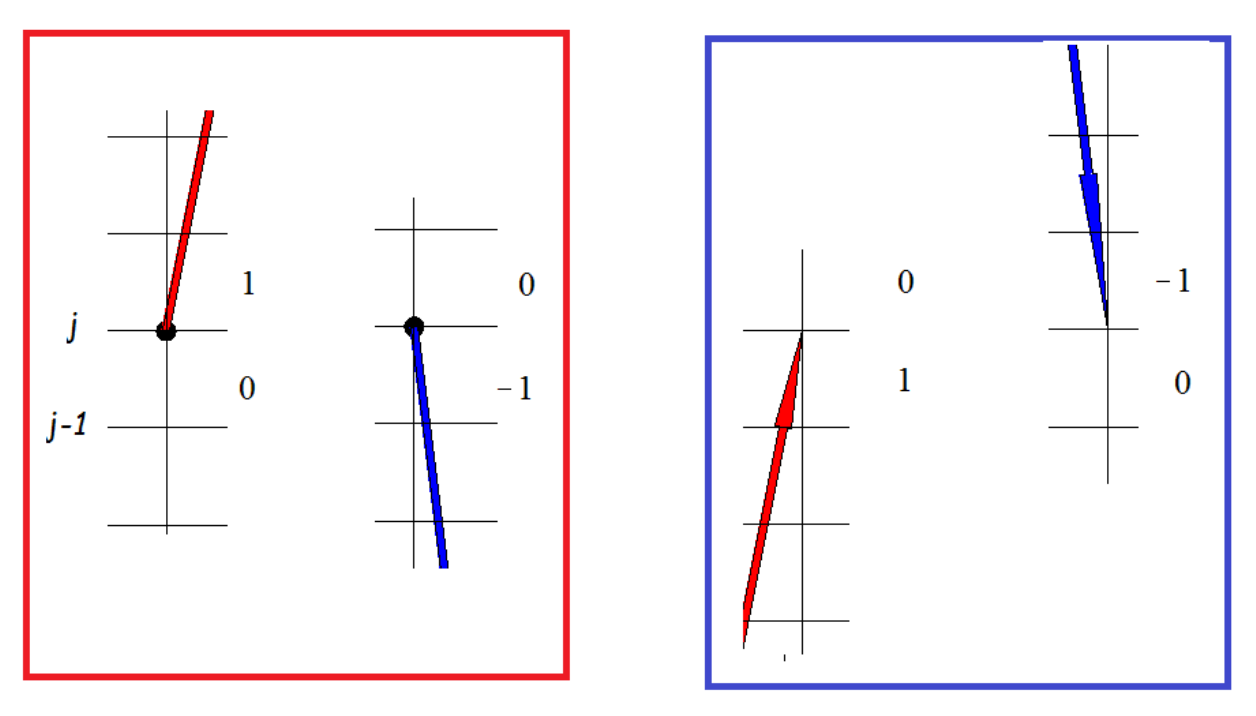}}
$$
\caption{The difference $c(j)-c(j-1)$ is $1$ in the left two cases,  is $-1$
in the right two cases.
\label{fig:FourCases}}
\end{figure}
\begin{proof}
For a single arrow $A$, the contribution to the difference $c(j)-c(j-1)$ is $0$ if i) $A$ has no segments in rows $j$ and $j-1$;
ii) $A$ has both segments in row $j$ and $j-1$;
iii) $A$ is a level vector starting at level $j$. In the first two cases,
it is clear that $A$ can not start nor end at level $j$,  and in the third case both the starting point and the ending point of A are at level $j$. The remaining cases are listed in Figure \ref{fig:FourCases}.
\end{proof}

\emph{Paths} are connected path diagrams. In what follows, paths will always end at level $0$ unless specified otherwise.
A path $P$ is usually encoded by $P=(b_1,b_2,\dots, b_N)$.
Pictorially, $P$ is a sequence of arrows $(A_1,\dots, A_N)$ where $A_i$ is the vector $(1,b_i)$, and
by connectivity, it has to starts at the point $(i, b_0+b_1+\cdots+b_{i-1})$, where $b_0$ is set to be $-b_1-\cdots-b_n$. We also write $P=A_1A_2\cdots A_N$ as concatenation of the $A_i$'s (treated as vectors).
 The \emph{type} of $P$ is the multiset $\type(P)=\{b_1,\dots, b_N\}^*$, where we use $^*$ to mean multiset. We also use the notation $s^k$ in multisets to denote $k$ copies of $s$, so
$\{1^3,2,4^2\}^*=\{1,1,1,2,4,4\}^*$.

We can think a path diagram $T=T(P,R)$ is obtained from a path $P$ by vertically shifting the arrows to start at the rank sequence $R$. Conversely, by vertical shifting of the arrows of $T$, we can uniquely obtain the path $P$, also called the V-path $P=\VPath(T)$ of $T$.
We say $T$ is (weakly) \emph{increasing} if its rank sequence $R$ is weakly increasing
and in addition if all end ranks of arrows of the path diagram $T$ are greater than or equal to $0$, we say $T$ is a \emph{positive path diagram}. Obviously, there exists the minimal increasing sequence $R$ for a positive path diagram $T$, i.e., $r_1 = \max\{0,-b_1\}$ and $r_{i+1} = \max \{r_i, -b_{i+1}\}$ for $2 \leq i \leq N-1$. We say such $T$ is a \emph{minimal path diagram}.
See the two path diagrams on the right in Figure \ref{fig:GenSweep}, a path and its minimal path diagram.

\begin{prop}\label{p-zero-row-count}
If a path $P$ starts and ends at level $0$, then it is row balanced.
\end{prop}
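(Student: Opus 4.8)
The plan is to read the claim off Lemma~\ref{l-DifferenceLevelCount} once we know that, level by level, the arrows of $P$ start and end in matching numbers. Concretely, the goal is to prove that for every integer $j$ the number of arrows of $P$ starting at level $j$ equals the number ending at level $j$; granting this, Lemma~\ref{l-DifferenceLevelCount} forces $c(j)=c(j-1)$ for all $j$, and a one-line limiting argument then gives $c(j)\equiv 0$.

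The first step is to exploit connectivity. Writing $P=A_1A_2\cdots A_N$, the end point of $A_i$ is the starting point of $A_{i+1}$ for each $1\le i\le N-1$, so these two points lie on the same line and in particular share a level. Hence the multiset of end ranks of $A_1,\dots,A_{N-1}$ equals the multiset of starting ranks of $A_2,\dots,A_N$. The second step closes this up at the two ends: the starting rank of $A_1$ is $b_0=0$ since $P$ starts at level $0$, and the end rank of $A_N$ is $0$ since $P$ ends at level $0$; adjoining these two equal numbers to the respective sides of the previous multiset identity gives
\[
\{\text{starting ranks of all arrows of }P\}^{*}=\{\text{end ranks of all arrows of }P\}^{*},
\]
which is exactly the level-by-level matching we wanted.

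The third step is the telescoping. By Lemma~\ref{l-DifferenceLevelCount} the matching says $c(j)-c(j-1)=0$ for every $j$, so $j\mapsto c(j)$ is constant; and since $P$ has only finitely many arrows, all its red and blue segments lie in finitely many rows, so $c(j)=0$ for every sufficiently large $j$. A constant function that vanishes somewhere vanishes everywhere, hence $c(j)=0$ for all $j$, i.e.\ $P$ is row balanced.

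I do not anticipate a substantive obstacle: the whole argument is bookkeeping built on Lemma~\ref{l-DifferenceLevelCount}. The only point that repays a moment's care is the second step --- verifying that ``starts at level $0$'' and ``ends at level $0$'' are exactly the inputs that promote the interior matching (which holds for any connected path diagram) to a global one --- together with the trivial observation that level (purple) arrows contribute nothing to either side because their starting and end ranks coincide.
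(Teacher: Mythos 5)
Your proof is correct, but it takes a genuinely different route from the paper's. The paper argues directly on segments: discarding the purple arrows, it observes that in any path the red and blue segments within a fixed row alternate, and that a row above level $0$ begins with a red segment and ends with a blue one (with the situation reversed below level $0$), so the two colors pair off and each row count vanishes. You instead route everything through Lemma~\ref{l-DifferenceLevelCount}: connectivity gives the multiset identity between end ranks of $A_1,\dots,A_{N-1}$ and starting ranks of $A_2,\dots,A_N$, the boundary conditions at level $0$ promote this to a matching of all starting and end ranks, and then $c(j)-c(j-1)=0$ for all $j$ together with $c(j)=0$ for $j$ large forces $c\equiv 0$. Your version has the advantage of leaning only on an already-proved lemma and some bookkeeping, avoiding the paper's appeal to the (true but unproved) alternation of colors within a row; the paper's version is shorter and more visual. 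Both are sound, and your handling of the level (purple) arrows — they contribute equally to both sides of the multiset identity since their starting and end ranks coincide — is consistent with how Lemma~\ref{l-DifferenceLevelCount} treats them.
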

\begin{proof}
For a path $P$ starting at level $0$ and end at level $0$, we ignore the purple arrows. Then it is clear that
every row above level $0$ starts with a red segment and ends with a blue segment, and every row below level $0$ starts with a blue segment and ends with a red segment.
The proposition then follows since in any path, the red and blue segments alternate in each row.
\end{proof}

\begin{defn}[Sweep map]
The sweep map of a path $P$ is obtained by sorting the arrows of $P$ according
to the following two conditions:
\begin{enumerate}
  \item We first successively list arrows of starting rank $0,1,\dots,\infty $ and then list arrows of starting rank $-\infty,\dots, -2,-1.$
  \item For arrows starting at the same level, we list them from right to left.
\end{enumerate}
\end{defn}

A path $P=(b_1,b_2,\dots, b_N)$ is called a general Free path if $b_1+b_2+\cdots+b_{N}  = 0$, and
$P$ is called a general Dyck path if in addition, $b_1+\cdots+b_{i-1}\ge 0$ for all $i$.
Classical Dyck paths of length $2N$ are of type $\{1^N,(-1)^N\}^*$. For a coprime pair of positive integers $(m,n)$, classical rational $(m,n)$-Dyck paths have length $m+n$, and are of type $\{m^n,(-n)^m\}^*$. For a multiset $M=\{b_1,\dots, b_N\}^*$ with $|M|=b_1+\cdots +b_N=0,$
we denote by $\cal F_M (\cal D_M)$ the set of Free (Dyck) paths of type $M$.

\def\sweep{\texttt{sweep}}
\def\osweep{\texttt{Osweep}}
The sweep map $\sweep(\oD)$ of a Dyck path $\oD$ is defined concisely as follows: sort the arrows according to their starting ranks, if two arrows have the same starting rank, then
the right arrow comes first. Concatenation of the vectors gives the path $\sweep(\oD)$. For instance, see Figure \ref{fig:model3-Example},
the picture of $\oD=(2, 0, 2, -3, 1,-2)$ is given by the left picture, from which we easily see
that $\sweep(\oD)=(2, 1, -2, 2, 0, -3)$ is given by the right picture. Note that for the three arrows with starting rank $2$, the right arrow comes first.

\begin{figure}[!ht]
  $$
 \hskip -1.9in \vcenter{ \includegraphics[height= 1.32 in]{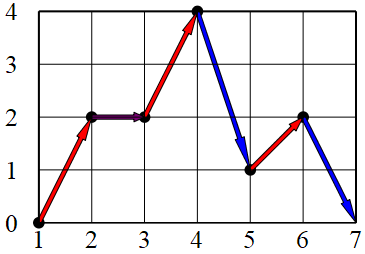}}
\hskip -2.65in  \vcenter{ \includegraphics[height=1.30 in]{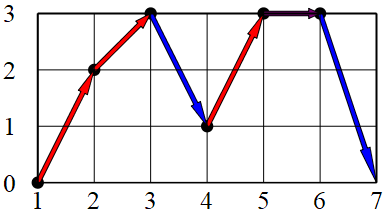}}
$$
\caption{An example of a Dyck path and its sweep map image.
\label{fig:model3-Example}}
\end{figure}

In \cite{Thomas-Williams}, H. Thomas and N. Williams repurposed the main theorem of \cite{Thomas-Williams2014} to prove that modular sweep map
is bijective and then concluded that the sweep map is bijective for general (Free) paths.
In particular, the sweep map is also a bijection for general Dyck paths.

Now let us see the second condition in definition of the sweep map. For arrows starting at the same level,
if we list arrows from left to right, then a small example will show that the sweep map is not bijective.
We aim to give a general sweep map called \emph{Order sweep} ($\osweep$ for short) and then show it is a bijection for general Dyck paths.


In order to define the Order sweep map, we use the following notations.
Let $\phi = (\phi_1, \phi_2, \dots)$ be a sequence of permutations such that $\phi_i \in \mathfrak{S}_i$. We denote by $\phi^{-1} = (\phi_1^{-1}, \phi_2^{-1}, \dots)$ the sequence of inverse permutations of $\phi$.
For a path $P = (b_1, b_2, \dots, b_N)$, let $k$ be the number of arrows starting at rank $0$.

\begin{defn}[\osweep]
The Order sweep map of a path $P$ is obtained by sorting the arrows of $P$ according
to the following two conditions:
\begin{enumerate}
  \item We first successively list arrows of starting rank $0,1,\dots,\infty $ and then list arrows of starting rank $-\infty,\dots, -2,-1.$
  \item For arrows starting at the same level, if they are starting at level $0$, we list them by the order $\phi_k$ $($from left to right$)$; otherwise, we list them from right to left.
\end{enumerate}
\end{defn}
When $\phi_k = k \ k-1 \cdots 2 \ 1$ for all $k$, it reduces to the sweep map.

For any multiset $M$ of integers with $|M|=0$ and any sequence of permutations $\phi = (\phi_1, \phi_2, \dots)$, we have the following Theorem.


\begin{theo}\label{t-mainresult}
The Order sweep map is a bijection 
$$\osweep: \cal D_M \rightarrow \cal D_M.$$
\end{theo}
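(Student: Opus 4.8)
The plan is to adapt Garsia and Xin's geometric inversion to the weakly increasing setting. Because $\cal D_M$ is finite, it will suffice to verify two facts: that $\osweep$ maps $\cal D_M$ into $\cal D_M$, and that $\osweep$ is injective on $\cal D_M$; finiteness then upgrades injectivity to bijectivity, and the injectivity argument will in fact produce the inverse explicitly. The crux is to recover the starting ranks of a Dyck path from its $\osweep$-image by attaching to that image a canonical balanced path diagram whose rank sequence is weakly increasing.

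First I would check that the image is again a Dyck path. Since $\osweep$ only permutes the arrows of $P$, we have $\type(\osweep(P))=M$, so $\osweep(P)\in\cal F_M$; and for $\osweep(P)\in\cal D_M$ the key observation is that in a general Dyck path every arrow starting at level $0$ is an up or a level vector, because a down vector starting at level $0$ would force a later partial sum below $0$. Consequently $\osweep(P)$ differs from $\sweep(P)$ only by a reordering of its first $k$ arrows, which form a prefix of up and level vectors; reordering such a prefix keeps all partial sums inside it nonnegative and changes no later partial sum, so $\osweep(P)\in\cal D_M$ follows from the fact (recalled in the introduction from Thomas and Williams) that $\sweep$ maps $\cal D_M$ into $\cal D_M$.

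For injectivity the central construction is the following. Given $P\in\cal D_M$, put $Q=\osweep(P)$ and let $R=(r_1,\dots,r_N)$ list the starting rank in $P$ of each arrow of $Q$, in the order these arrows occur in $Q$. Since $\osweep$ sorts arrows by increasing starting rank and all starting ranks of a Dyck path are $\ge 0$, this $R$ is weakly increasing with $r_1=0$, and the path diagram $T(Q,R)$ is obtained from the path diagram $P$ simply by reordering its arrows. As the row counts are unchanged under reordering arrows (each arrow's segment contributions depend only on its rank and its vector), and $P$ is balanced by Proposition \ref{p-zero-row-count}, the diagram $T(Q,R)$ is balanced; by Lemma \ref{l-DifferenceLevelCount} this forces, at every level, as many arrows to start there as to end there, so since none starts below level $0$ none ends there either, and $T(Q,R)$ is in addition a positive path diagram. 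I would then prove the essential \emph{uniqueness statement}: for every $Q\in\cal D_M$ there is exactly one weakly increasing rank sequence $R$ with $r_1=0$ for which $T(Q,R)$ is balanced. By Lemma \ref{l-DifferenceLevelCount} this amounts to solving, among such sequences, the requirement that the multiset of starting ranks equal the multiset of ending ranks, and I would establish it by a greedy ``fill from the bottom'' argument in which $r_i$ is forced to be the least value $\ge r_{i-1}$ compatible with the balance requirement at the levels already saturated.

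Granting uniqueness, $T_Q:=T(Q,R)$ is an invariant of $Q$, and I would recover $P$ from it by inverting the tie-breaking rules of $\osweep$: within the block of $T_Q$-arrows sharing a fixed starting rank $\lambda$, the left-to-right order in $P$ is the reverse of the order in $T_Q$ when $\lambda\neq 0$, and is obtained by applying $\phi_k^{-1}$ when $\lambda=0$ (with $k=\#\{i:r_i=0\}$), after which $P$ is the unique path gotten by chaining these arrows from level $0$, always appending the next not-yet-used arrow whose starting rank equals the current level. Hence $\osweep(P)=\osweep(P')$ forces the same $R$, the same $T_Q$, and the same reconstruction, so $P=P'$; with the first step and finiteness of $\cal D_M$ this proves the theorem, and the recipe just described is $\osweep^{-1}$. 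I expect the main obstacle to be exactly the uniqueness statement: in the coprime rational case of Garsia and Xin the rank sequence is \emph{strictly} increasing and the forcing is transparent, whereas here repeated ranks must be accommodated, and one must check both that the level-$0$ block emerges with the correct size $k$ and that the permutation $\phi_k$ never interferes with the forcing of the remaining ranks. Everything else — the Dyck property of the image, the chaining, and the bookkeeping — should be routine once the uniqueness is in hand.
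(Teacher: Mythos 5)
Your first step (that $\osweep$ maps $\cal D_M$ into $\cal D_M$) is fine: in a Dyck path every arrow of starting rank $0$ is an up or level vector, so $\osweep(P)$ and $\sweep(P)$ differ only by a permutation of a prefix of nonnegative steps, and the claim reduces to the known statement for $\sweep$ (the paper instead gives a self-contained geometric proof via Proposition \ref{p-VPath2Dyckpath}, but either route works). The problem is in the injectivity half: your \emph{uniqueness statement} is false. Take $Q=(1,-1,1,-1)$. Both $R=(0,1,1,2)$ and $R=(0,1,2,3)$ (and indeed $(0,1,3,4)$, etc.) are weakly increasing with $r_1=0$ and make $T(Q,R)$ balanced: in each case every up arrow's single red segment is cancelled by the blue segment of the down arrow placed one level above it, and empty rows have count $0$. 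So the balance condition together with weak monotonicity and $r_1=0$ does \emph{not} pin down $R$, your greedy ``fill from the bottom'' forcing breaks down exactly where you feared (repeated/skipped ranks), and the diagram $T_Q$ is not an invariant of $Q$ as defined. This is precisely the point where the general case genuinely differs from the coprime rational case of Garsia--Xin, where strict increase of ranks makes every increasing balanced diagram automatically ``tight.''

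What is missing is an additional condition singling out the right diagram: the paper's notion of a \emph{stable} diagram, i.e., one on which the chaining procedure ($\HPath^{\phi_k^{-1}}$, an Eulerian-tour traversal) labels \emph{all} arrows without getting stuck back at level $0$. In the example above, $R=(0,1,2,3)$ is balanced but not stable (the tour returns to level $0$ after two arrows), and the paper repairs this by shifting the unvisited balanced sub-diagram down one level (step 3 of $\HPath^{\phi_k^{-1}}$), which lands on $R=(0,1,1,2)$. The paper then avoids proving uniqueness altogether: it proves \emph{surjectivity} by exhibiting, for each $Q$, some increasing balanced diagram with V-path $Q$ (Algorithm $\VIB$, whose termination and tightness require their own arguments), feeding it to $\HPath^{\phi_k^{-1}}$, and invoking finiteness of $\cal D_M$ together with your first step. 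To rescue your injectivity route you would have to show that $\HIB^{\phi_k}(P)$ is always stable and that a stable increasing balanced diagram with a given V-path is unique --- neither of which follows from your balance-plus-monotonicity characterization. As written, the proof has a genuine gap at its central claim.
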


The paper is organized as follows. In this introduction, we have introduced the basic concepts.
In section \ref{s-Balanced tableaux}, we give a better description of the Order sweep map and reduce the inverting Order sweep map to finding an increasing balanced path diagram.
In section \ref{s-IBT}, we describe an algorithm $\VIB$ which can output an increasing balanced path diagram.
The tightness of Algorithm $\VIB$ is given in section \ref{s-VIB} and we construct the algorithm for inverting the Order sweep map.
Finally in section \ref{s-apply}, we give an application that the sweep map and Order sweep map are also bijections for incomplete general Dyck paths.

\section{Balanced increasing path diagram}\label{s-Balanced tableaux}
Like for $(m,n)$-rational Dyck paths, increasing balanced path diagrams play an important role in inverting the Order sweep map.
Let us decompose the Order sweep map for a general Dyck path $\oD$ with $k$ arrows starting at rank $0$ as follows. See Figure \ref{fig:GenSweep} for an example.
\begin{enumerate}
  \item Use the sweep map order to label the arrows of $\oD$. That is, if we label the starting points of the arrows whose starting ranks are $0$, then we use the order $\phi_k$ to label them; otherwise,
  we label the starting points of the arrows, from bottom to top, and if two points are in the same level, from right to left.
  \item Horizontally shift the arrows so that small labels comes first. Denote this path diagram by $T=\HIB^{\phi_k}(\oD)$.
  \item Set $\osweep(\oD)=\VPath(T)$, the V-path of $T$.
\end{enumerate}

\begin{figure}[!ht]
\centering{
\mbox{\includegraphics[height=3.30 in]{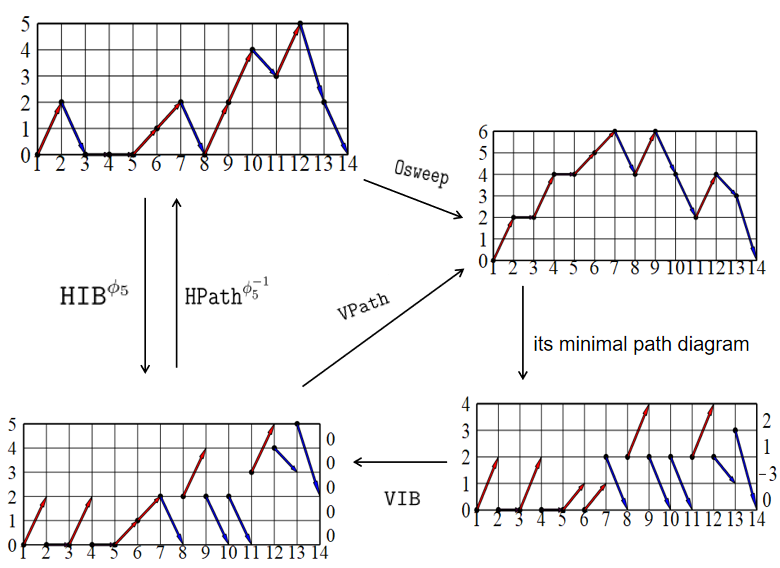}}
\caption{An example of the decomposition of the Order sweep map with $\phi_5 = 13524$ and $\phi_5^{-1} = 14253$. The notations $\HPath^{\phi_5^{-1}}$ and $\VIB$ will be defined later.}
\label{fig:GenSweep}
}
\end{figure}

The path diagram $T=\HIB^{\phi_k}(\oD)$ is clearly increasing. It is balanced since $\oD$ is balanced and horizontal shifts preserve row counts.
\begin{prop}\label{p-VPath2Dyckpath}
  The V-path of an increasing balanced path diagram is a Dyck path.
\end{prop}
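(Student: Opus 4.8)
The plan is to show that the V-path $Q = \VPath(T)$ of an increasing balanced path diagram $T = T(P,R)$ is a Dyck path, i.e. that its partial sums $b_0 + b_1 + \cdots + b_{i-1}$ are nonnegative for every $i$ (it already ends at level $0$ since $T$ is balanced — in fact row-balanced forces the net displacement to vanish, and connecting the arrows by vertical shifts produces a genuine path ending at level $0$). The key is to relate the vertical position of the $i$-th vertex of $Q$ to the row counts of $T$, which we can control because $T$ is balanced and increasing.

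First I would recall Lemma \ref{l-DifferenceLevelCount}: for any path diagram, $c(j) - c(j-1)$ equals the number of arrows starting at level $j$ minus the number ending at level $j$. Summing this telescoping identity over all $j \le \ell$ gives $c(\ell) = \#\{\text{arrows starting at level} \le \ell\} - \#\{\text{arrows ending at level} \le \ell\}$ (using that $c(j) = 0$ for $j$ sufficiently negative). For a \emph{balanced} path diagram every $c(\ell) = 0$, so for every level $\ell$ the number of arrows starting at a level $\le \ell$ equals the number ending at a level $\le \ell$. Since $T$ is increasing, its rank sequence $R$ is weakly increasing, which means that the first $i$ arrows $A_1, \dots, A_i$ are exactly (a subset related to) those with the smallest starting ranks; more precisely, if $r_i = \ell$ then $A_1, \dots, A_i$ all start at level $\le \ell$.

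Next I would compute the height of the $i$-th vertex of $Q$. When we form $Q = \VPath(T)$ we reattach the arrows $A_1, \dots, A_N$ head-to-tail; the starting level of arrow $A_{i+1}$ in $Q$ is $b_1 + \cdots + b_i$ (up to the normalizing constant $b_0$, chosen so $Q$ ends at $0$ — equivalently, the height after $i$ steps is $\sum_{t=1}^{i} b_t$ plus that constant). The point is that $\sum_{t \le i} b_t$ = (end rank of $A_i$ in $T$) $-$ (start rank of $A_1$ in $T$) summed telescopically $= \sum_{t\le i}(\text{endrank}(A_t) - \text{startrank}(A_t))$ — no wait; more usefully, because $R$ is weakly increasing, $\mathrm{startrank}(A_{t+1}) \ge \mathrm{startrank}(A_t) = \mathrm{endrank}(A_t) - b_t$, so $\sum_{t\le i} b_t = \sum_{t\le i}\big(\mathrm{endrank}(A_t) - \mathrm{startrank}(A_t)\big)$ and $\mathrm{endrank}(A_i) - \mathrm{startrank}(A_{i+1}) \le \mathrm{endrank}(A_i) - \mathrm{startrank}(A_i) = b_i$, giving a lower bound on partial sums of $Q$ in terms of the gaps in $R$. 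Combining this with the balanced condition (which pins down the constant $b_0$) yields that every partial sum of $Q$ is $\ge 0$.

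The main obstacle I anticipate is bookkeeping the normalization: translating the statement "every row count is $0$ and $R$ is weakly increasing" into the precise inequality "$b_0 + b_1 + \cdots + b_{i-1} \ge 0$" requires identifying the constant $b_0 = -b_1 - \cdots - b_N$ with something read off from $T$, and checking that the weak (rather than strict) monotonicity of $R$ — which is exactly what distinguishes the general-Dyck case from the rational case of Garsia--Xin — still suffices. I expect the cleanest route is: show $\mathrm{startrank}_T(A_1) = 0$ (the minimal starting rank is $0$ because $T$ is balanced, hence positive, and has an arrow ending at the bottom), show the last vertex of $Q$ sits at level $0$ (balancedness), and then argue by the telescoping/Lemma \ref{l-DifferenceLevelCount} computation above that the height of $Q$ after $i$ steps equals $\mathrm{startrank}_T(A_{i+1}) - (\text{something nonnegative}) \ge 0$ when $i < N$, with equality handled at the endpoints. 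Once the dictionary between vertex heights of $Q$ and ranks of $T$ is set up correctly, the nonnegativity is immediate from $R$ being weakly increasing and bounded below by $0$.
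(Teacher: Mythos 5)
You have gathered the right raw ingredients, but the argument never closes, and the step you lean on at the end is false. Your two correct observations are: (a) telescoping Lemma~\ref{l-DifferenceLevelCount} over a balanced diagram shows that the number of arrows starting at each level equals the number ending at that level, so the multiset of end ranks coincides with the multiset of start ranks; and (b) weak monotonicity of $R$ means $A_1,\dots,A_i$ carry the $i$ smallest start ranks. But the inequalities you actually write down (e.g.\ $\mathrm{endrank}(A_i)-\mathrm{startrank}(A_{i+1})\le b_i$) bound the wrong quantity in the wrong direction and are never converted into a lower bound on $b_1+\cdots+b_i$; the conclusion ``every partial sum is $\ge 0$'' is asserted, not derived. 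Worse, your proposed ``cleanest route,'' the identity (height of $Q$ after $i$ steps) $=\mathrm{startrank}_T(A_{i+1})-(\text{something nonnegative})$, is simply not true: for $D=(1,1,-1,-1)$ with $R=(0,0,1,1)$, an increasing balanced path diagram, the height after one step is $1$ while $\mathrm{startrank}(A_2)=0$. (Also ``balanced, hence positive'' is wrong: shifting every rank of a balanced increasing diagram down by $5$ preserves both properties; positivity is neither implied nor needed.)

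The fix is short and uses exactly (a) and (b): writing $e_t=r_t+b_t$ for the end ranks, one has $\sum_{t\le i}b_t=\sum_{t\le i}e_t-\sum_{t\le i}r_t$; since the $e_t$ form the same multiset as the $r_t$ and $r_1\le\cdots\le r_N$, the sum $\sum_{t\le i}r_t$ is the smallest possible sum of $i$ elements of that common multiset, hence $\sum_{t\le i}e_t\ge\sum_{t\le i}r_t$ and every partial sum of $\VPath(T)$ is nonnegative, with equality at $i=N$ so the path ends at level $0$. That completed version is a genuinely different, and arguably more elementary, route than the paper's, which is geometric: the height of the V-path after $i$ steps equals the signed segment count of the first $i$ columns; cutting the plane by the vertical line after column $i$ and the horizontal line at level $r_{i+1}$, monotonicity forces the region above that level to the left to contain only red segments and the region below it to the right to contain only blue segments, and balancedness of the rows below level $r_{i+1}$ then forces the left half-plane's count to be nonnegative. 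Either argument works once written out; yours as submitted has a genuine gap at precisely the decisive inequality.
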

\begin{proof}
Let $T$ be an increasing balanced path diagram. Pick any $i$, the picture of $T$ looks like  Figure \ref{fig:VPathDyck},
where the two green lines split the plane into four regions as displayed. We need to show that the total segment count in $A$ and $C$ is nonnegative.
\begin{figure}[!ht]
\centering{
\mbox{\includegraphics[height=2.6 in]{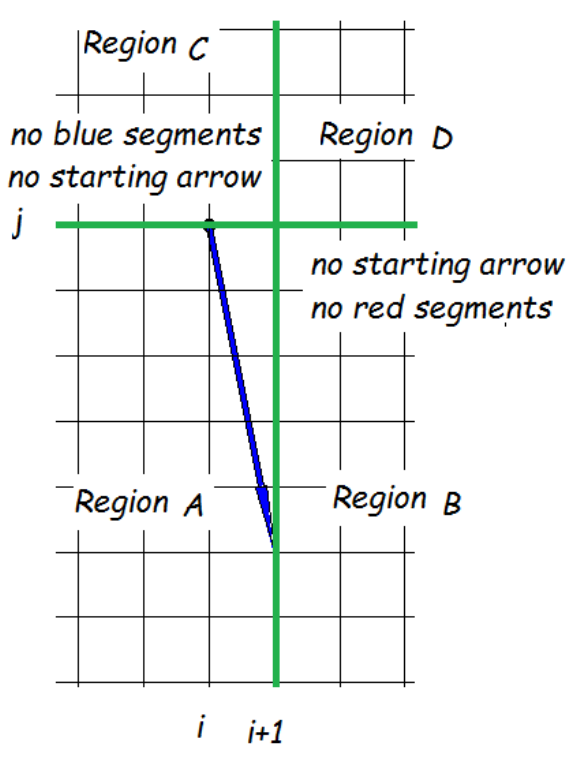}}
\caption{The picture here shows that the V-path is a Dyck path.}
\label{fig:VPathDyck}}
\end{figure}

By the increasing hypothesis of the starting ranks of $T$, region $C$ may only have red segments and region $B$ may only have blue segments.
Additionally the balance property of $T$ shows that the total segment count of region $A$ and $B$ is $0$. This forces the segment count of $A$ to be nonnegative. So the total segment count in $A$ and $C$ is nonnegative.
\end{proof}

\begin{cor}\label{c-Orderimage}
The Order sweep image of a general Dyck path is a general Dyck path.
\end{cor}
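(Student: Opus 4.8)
The plan is to deduce this directly from the decomposition of the Order sweep map together with Proposition \ref{p-VPath2Dyckpath}, so almost no new work is needed. Given a general Dyck path $\oD$ with $k$ arrows of starting rank $0$, recall that by the three-step description of $\osweep$ we have $\osweep(\oD) = \VPath(T)$, where $T = \HIB^{\phi_k}(\oD)$ is obtained by labelling the arrows of $\oD$ in the sweep order (using $\phi_k$ at level $0$) and then horizontally shifting the arrows so that smaller labels come first. Thus it suffices to verify that $T$ satisfies the hypotheses of Proposition \ref{p-VPath2Dyckpath}, namely that $T$ is an increasing balanced path diagram.

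First I would observe that $T$ is increasing: by construction its arrows are reindexed so that their starting ranks appear in weakly increasing order along the $x$-axis, so the rank sequence of $T$ is weakly increasing. Next, for balancedness, a general Dyck path both starts and ends at level $0$, so Proposition \ref{p-zero-row-count} shows that $\oD$ is row balanced; passing from the labelled $\oD$ to $T$ only shifts arrows horizontally, which leaves the multiset of arrow segments lying in each row unchanged, hence preserves every row count. Therefore $T$ is balanced, and Proposition \ref{p-VPath2Dyckpath} applies, giving that $\VPath(T) = \osweep(\oD)$ is a Dyck path. Since $\VPath(T)$ is obtained from $T$ by vertical shifts it has the same type $M$ as $\oD$ (in particular its entries sum to $0$, so it ends at level $0$), and so it is a general Dyck path.

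I do not expect any real obstacle here: the corollary is essentially a repackaging of the decomposition of $\osweep$ and of Proposition \ref{p-VPath2Dyckpath}. The only point that deserves a moment's care is the balancedness of $T$, where one invokes Proposition \ref{p-zero-row-count} together with the invariance of row counts under horizontal translation; everything else is formal.
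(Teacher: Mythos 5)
Your proof is correct and follows exactly the paper's route: decompose $\osweep(\oD)$ as $\VPath(\HIB^{\phi_k}(\oD))$, note that $\HIB^{\phi_k}(\oD)$ is increasing and balanced (via Proposition \ref{p-zero-row-count} and the invariance of row counts under horizontal shifts), and apply Proposition \ref{p-VPath2Dyckpath}. You supply slightly more detail than the paper's two-line proof, but the argument is the same.
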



\begin{proof}
By definition, the Order sweep image of a general Dyck path $\oD$ is represented by the V-path of the path diagram $T$, denoted as $\HIB^{\phi_k}(\oD)$. Since $\HIB^{\phi_k}(\oD)$ constitutes an increasing balanced path diagram, it follows directly from Proposition \ref{p-VPath2Dyckpath}.
\end{proof}

Now, we give an algorithm $\HPath^{\phi_k^{-1}}$ for constructing Order sweep map pre-image from an increasing balanced path diagram $T=(D,R)$ with a sequence $(A_1,\dots, A_N)$ of $N$ arrows.

\noindent
Algorithm $\HPath^{\phi_k^{-1}}$:  

\textbf{Input:} An increasing balanced path diagram $T=(D,R)$ with a sequence $(A_1,\dots, A_N)$.

\textbf{Output:} A Dyck path $\bD=\HPath^{\phi_k^{-1}}(T)$.
\begin{enumerate}
\item Set $r_1 = 0$ as the current level and $\n = 0$. Set or update $k$ to be the number of arrows starting at level $0$.

\item For $i$ from $1$ to $N$ do the following.

          If $r_i = 0$ then set $\n = \n+1$. Find the $\phi_k^{-1}(\n)$-th arrow from left to right at level $0$ , say $A_j$;

          Otherwise, find the rightmost unlabeled arrow, say $A_j$, that starts at the current level.

         If no such $j$ exists then go to step 3;

         Otherwise label $A_j$ by $i$, set $\pi(i)=j$, and update the current level as the end rank of $A_{\pi(i)}$.

\item When step 2 stops early with no unlabelled arrow found, then update $T$ by shifting all unlabelled arrows one level down. Erase all labels and go to step 1.

\item If $D=(A_1,\dots, A_N)$, then output $\bD=(A_{\pi(1)},A_{\pi(2)},\dots, A_{\pi(N)})$.
\end{enumerate}

If an increasing balanced path diagram $T$ is updated by the above step $3$ eventually to $T'$ whose all of the arrows can be labelled in step $2$, then we call such $T'$ \emph{stable}. (Note that the step $3$ may not be used.)

Here we give an example to show an increasing balanced path diagram and its stable balanced path diagram.
\begin{figure}[!ht]
  $$
 \hskip -1.75in\vcenter{ \includegraphics[height= 0.96 in]{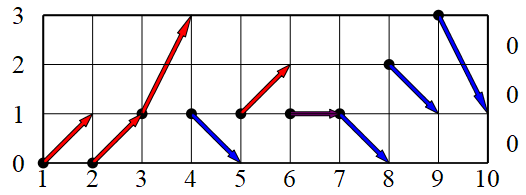}}
 \hskip -3.05in \vcenter{ \includegraphics[height=0.80 in]{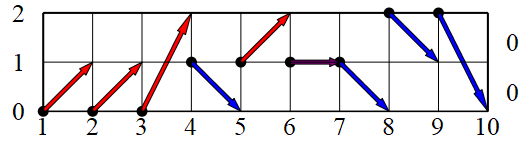}}
$$
\caption{An increasing balanced path diagram and its corresponding stable balanced path diagram.}
\end{figure}

\begin{lem}\label{l-step2balanced}
In Algorithm $\HPath^{\phi_k^{-1}}$, step 2 stops early with no unlabelled arrow found. Assume that the labeled arrows are $A_{\pi(1)}, A_{\pi(2)}, \cdots ,A_{\pi(i)}$ where $i < N$. Then we can obtain a Dyck path by horizontally shifting of the labeled arrows, i.e., $P=A_{\pi(1)}A_{\pi(2)}\cdots A_{\pi(i)}$ is a Dyck path.
It is balanced and the unlabled arrows is also balanced.
\end{lem}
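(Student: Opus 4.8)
The plan is to analyze the moment when step 2 halts, keeping track of the partial path $P = A_{\pi(1)}A_{\pi(2)}\cdots A_{\pi(i)}$ assembled from the labeled arrows in the order dictated by the algorithm. First I would observe that $P$ is a genuine path: the algorithm updates the current level to be the end rank of $A_{\pi(i)}$ at each stage, and picks the next arrow to start at that current level, so concatenating the vectors reproduces a connected lattice walk starting at level $r_1 = 0$. Next I would argue that $P$ ends at level $0$. The stopping condition is that at the current level there is no unlabeled arrow starting there; but $T$ is an increasing balanced path diagram, so by Lemma \ref{l-DifferenceLevelCount} together with the balance hypothesis the count $c(j)$ is constant (equal to $0$) in every row, and the number of arrows starting at a level equals the number ending at that level, level by level. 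This matching is what I would exploit: in the already-labeled prefix, every arrow that ends at a given level $\ell > 0$ must have been followed by an arrow starting at $\ell$ (otherwise the algorithm would have stopped there earlier), so the only place the process can stall is level $0$. Hence the current level when step 2 halts is $0$, i.e. $P$ ends at level $0$, and $P$ is a Free path.

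To upgrade "Free path ending at $0$" to "Dyck path", I would show the partial path never dips below level $0$. Here I would use the increasing property of $R$ in the same spirit as the proof of Proposition \ref{p-VPath2Dyckpath}: the labeled arrows, being precisely those reachable by the greedy descent from level $0$, have all their end ranks $\ge 0$ (this is really the content of the stopping analysis — if some $A_{\pi(t)}$ ended below $0$ we would need an arrow starting below $0$ to continue, but arrows starting at negative levels are only produced after the step-3 shift-down, not in the current pass). Combined with connectivity and starting at $0$, this forces all intermediate heights of $P$ to be $\ge 0$, so $P \in \cal D_M$ for the appropriate type $M$.

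For the balance claims, I would first note that $P = A_{\pi(1)}\cdots A_{\pi(i)}$ is itself a path starting and ending at level $0$, so it is balanced by Proposition \ref{p-zero-row-count}. For the complementary statement, let $U$ be the path diagram consisting of the unlabeled arrows of $T$ (in their current positions). Since horizontal shifts preserve row counts and $T = $ (labeled arrows) $\sqcup$ (unlabeled arrows) has all row counts $0$ by hypothesis, while the labeled arrows — viewed in their positions inside $T$ — have the same row counts as $P$ (horizontal shifting), which are all $0$, it follows that the row counts of the unlabeled part are $0 - 0 = 0$ in every row. Hence $U$ is balanced as well.

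The main obstacle I anticipate is the step establishing that the process can only stall at level $0$ and that all labeled end ranks are nonnegative — this is where the greedy-descent structure of the algorithm, the level-by-level matching of Lemma \ref{l-DifferenceLevelCount}, and the increasing hypothesis on $R$ must be combined carefully, and one has to be attentive to the role of the $\phi_k^{-1}$-ordering at level $0$ (which affects \emph{which} arrows are chosen there but not the purely height-based bookkeeping). The remaining pieces are essentially bookkeeping with row counts once that structural fact is in hand.
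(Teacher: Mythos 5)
Your argument is correct and follows essentially the same route as the paper: the in-degree/out-degree matching at each level (Lemma \ref{l-DifferenceLevelCount} plus the balance of $T$) forces the walk to stall only at level $0$, the increasing hypothesis keeps it nonnegative so $P$ is a Dyck path, Proposition \ref{p-zero-row-count} gives balance of $P$, and subtracting row counts handles the unlabeled part. One minor aside: your parenthetical that arrows starting at negative levels are ``only produced after the step-3 shift-down'' is not quite right (Lemma \ref{l-updatebalanced} guarantees $r_1=0$ persists after the shift), but this does not affect the argument.
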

\begin{proof}
In Step 2, the process terminates at $A_{\pi(i)}$ with $i < N$. Suppose that the end rank of the arrow $A_{\pi(i)}$ is $j$. We assert that $j = 0$ and that all arrows originating at rank $0$ are labeled. If this were not the case, then we would have (repeatedly) encountered $j > 0$ as an end rank, followed by $j$ as a starting rank upon departure, ultimately terminating with $j$ as the end rank of $A_{\pi(i)}$. Consequently, the count of arrows ending at level $j$ would exceed the count of arrows starting at level $j$.
However, these two counts must be equal by Lemma \ref{l-DifferenceLevelCount}, and the input diagram $T$ is balanced, or more specifically, $c(j) - c(j-1) = 0$ for all $j$. This implies that the path $P = A_{\pi(1)}A_{\pi(2)}\cdots A_{\pi(i)}$ initiates and concludes at level $0$. By Proposition \ref{p-zero-row-count}, we infer that $P$ is balanced. Furthermore, it is readily apparent that $P$ is a Dyck path, and the remaining unlabeled arrows also form a balanced diagram, since the removal of a balanced sub-diagram from a balanced diagram necessitates that the residual diagram remains balanced.
\end{proof}

\begin{lem}\label{l-updatebalanced}
In Algorithm $\HPath^{\phi_k^{-1}}$ step 3 updates $T$ to a new increasing balanced path diagram with $r_1=0$. Moreover, a stable path diagram is an increasing balanced path diagram.
\end{lem}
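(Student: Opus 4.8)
The statement has two parts: (i) step 3 of Algorithm $\HPath^{\phi_k^{-1}}$, when invoked, outputs an increasing balanced path diagram with $r_1 = 0$; and (ii) a stable path diagram is itself an increasing balanced path diagram. I plan to treat (i) first, since (ii) follows almost formally from (i) once one unwinds the definition of stability.

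For part (i), assume step 2 has stopped early, so by Lemma \ref{l-step2balanced} the labeled arrows $A_{\pi(1)}, \dots, A_{\pi(i)}$ form a balanced Dyck path $P$ and the collection $U$ of unlabeled arrows is balanced as a sub-diagram. Step 3 replaces $T$ by the diagram $T'$ whose arrows are exactly the arrows of $U$, each shifted one level down (equivalently, each $r_j$ replaced by $r_j - 1$). The new rank sequence is obtained from the old one by deleting the entries indexed by $\pi(1), \dots, \pi(i)$ and subtracting $1$ from what remains; since deletion preserves weak monotonicity and a uniform shift preserves it too, $T'$ is increasing. Balancedness of $T'$ is immediate: the arrows of $U$ already form a balanced sub-diagram of $T$ (Lemma \ref{l-step2balanced}), and a uniform vertical shift of an entire diagram translates every arrow's starting and ending levels by the same amount, hence preserves all row counts by Lemma \ref{l-DifferenceLevelCount} — or more directly, preserves each $c(j)$ up to reindexing $c(j) \mapsto c(j-1)$, so "all row counts zero" is unaffected. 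Finally I must check $r_1 = 0$ for $T'$: the old $r_1$ was $0$ and the arrow $A_1$ starts at level $0$, so step 2 with $i = 1$ necessarily labels some arrow at level $0$ (the $\phi_k^{-1}(1)$-th one), meaning $A_1$'s rank entry — or whichever level-$0$ arrow gets label $1$ — is among the deleted ones. More carefully: all arrows starting at level $0$ get labeled (this was shown inside the proof of Lemma \ref{l-step2balanced}), and since the diagram is increasing with $r_1 = 0$, the arrows at level $0$ are exactly a prefix $A_1, \dots, A_k$ of the rank sequence; after deleting all of them, the smallest surviving rank is some $r \ge 1$ among the old ranks, and after subtracting $1$ it becomes $r - 1 \ge 0$. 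I should argue it is exactly $0$: if the smallest surviving old rank were $r \ge 2$, then no unlabeled arrow starts at level $1$ either, which combined with balancedness... actually the cleanest route is to observe that $P$ ends at level $0$ and the next arrow to be processed (the first unlabeled one in left-to-right order, which step 2 failed to find at level $0$... ) — I will instead simply note that the stable/updated diagram is fed back into step 1 which resets $r_1$ to $0$, and verify that the smallest surviving rank minus $1$ is indeed $0$ by a short parity/prefix argument using that $U$ is nonempty and balanced, so it must contain an arrow with the minimal surviving starting rank and that rank must have been $\le$ the level reached, forcing it to be $1$ before the shift.

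For part (ii), recall $T'$ is called \emph{stable} if after finitely many applications of step 3 all arrows get labeled in step 2. By part (i) each application of step 3 produces an increasing balanced path diagram with $r_1 = 0$, so in particular the terminal diagram $T'$ reached before the successful run of step 2 is increasing, balanced, and has $r_1 = 0$. If step 3 is never used, then $T' = T$ is already increasing and balanced by hypothesis. Hence in all cases a stable path diagram is an increasing balanced path diagram, which is what we wanted.

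The main obstacle I anticipate is the bookkeeping for $r_1 = 0$ of the updated diagram — i.e., ruling out that the shift-down overshoots and produces a diagram whose minimal rank is negative, or fails to reach $0$. This requires combining three facts: that step 2 labels \emph{exactly} the level-$0$ arrows and then a balanced Dyck chunk above them, that the diagram is increasing (so level-$0$ arrows form a prefix), and that $U$ is nonempty (since $i < N$) and balanced. Everything else — monotonicity under deletion and uniform shift, and balancedness under uniform shift — is routine once Lemma \ref{l-DifferenceLevelCount} and Proposition \ref{p-zero-row-count} are in hand.
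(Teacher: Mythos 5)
Your proof rests on a misreading of Step 3: you take it to replace $T$ by the diagram whose arrows are exactly the unlabeled arrows, i.e., to \emph{delete} the labeled ones. The algorithm does no such thing --- it keeps all $N$ arrows, shifts only the unlabeled ones down one level, erases the labels, and restarts. (This is forced by Step 4, which outputs a permutation of all $N$ arrows, and by the definition of \emph{stable}, which requires that eventually \emph{all} of the arrows can be labelled.) The misreading breaks both halves of your argument for part (i). First, the increasing property is not a matter of ``deletion plus uniform shift'': the genuine issue is a labeled arrow $A_a$ and an unlabeled arrow $A_b$ with $a<b$ and $r_a=r_b>0$, since after the shift one would have $r_a>r_b-1$. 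The paper rules this out using the fact that Step 2 selects the \emph{rightmost} unlabeled arrow at each positive level, so the labeled arrows at any positive level always lie to the right of the unlabeled ones; this is the one nontrivial point of the lemma (the paper explicitly notes that the argument depends on the right-to-left sweep order at positive levels), and your proposal never touches it. Second, $r_1=0$ is immediate under the correct reading --- $A_1$ starts at level $0$, every level-$0$ arrow gets labeled, hence $A_1$ is not shifted --- whereas under your reading you would need the minimal unlabeled starting rank to be exactly $1$, which is false in general: take the arrows $0\to 2$, $2\to 4$, $2\to 0$, $4\to 2$ with rank sequence $(0,2,2,4)$; Step 2 labels the first and third arrows, and the surviving unlabeled ranks are $2$ and $4$. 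You correctly flagged this as the ``main obstacle'' but did not resolve it, and within your deletion framework it cannot be resolved.

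Part (ii) of your argument (a stable diagram is increasing and balanced, by iterating part (i) together with the trivial case in which Step 3 is never invoked) is fine and matches the paper's intent, but it inherits the gap from part (i).
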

\begin{proof}
Consider that Step 2 concludes at $A_{\pi(i)}$ with an end rank of $j$, where $i < N$. This implies that no new arrows commence at rank $j$.
By Lemma \ref{l-step2balanced}, we deduce that $j = 0$, and the sequence of labeled arrows forms a Dyck path $P = A_{\pi(1)} \cdots A_{\pi(i)}$. This path is balanced, and the remaining unlabeled arrows also constitute a balanced diagram.
Hence, by descending all unlabelled arrows by one level, we maintain an \emph{increasing} balanced path diagram with $r_1 = 0$. The increasing property is retained because the inequality $r_a \leq r_b$ for $a < b$ can only be violated if $0 < r_a = r_b$ and $A_a$ is labeled while $A_b$ is not, which contradicts the selections made in Step 2. (It is important to note that the preceding argument relies on the assumption that the sweep order for each positive level is from right to left.)
\end{proof}

\begin{rem}
In \cite{Rational-Invert}, Garsia and Xin defined the \HPath \ algorithm for constructing  sweep map pre-image which need not use step 3.
This is because the coprime condition $(m,n)=1$ forces every increasing balanced path diagram of type $\{m^n,(-n)^m\}^*$ has to be stable.
Moreover, there is only one arrow with starting rank $0$.
\end{rem}

\begin{lem}
It is independent of $\phi_k^{-1}$ that we determine whether an increasing balanced path diagram is a stable balanced path diagram or not.
\end{lem}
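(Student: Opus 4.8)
The plan is to prove the stronger claim that the set of arrows labelled during Step~$2$ of Algorithm $\HPath^{\phi_k^{-1}}$, applied to a fixed increasing balanced path diagram $T$, does not depend on $\phi_k^{-1}$. Granting this, stability of $T$ (that Step~$2$ labels all arrows of $T$) is visibly $\phi_k^{-1}$-independent; and since each application of Step~$3$ merely pushes the unlabelled arrows --- a $\phi_k^{-1}$-independent set --- down one level, the whole chain of Step~$3$ updates, hence the stable diagram it terminates at, is determined by $T$ alone.

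Two preliminaries. First, an increasing balanced path diagram with $r_1=0$ is positive: if some arrow ended at a level $j<0$, then no arrow could start at level $j$ (all starting ranks are $\ge r_1=0$) while at least one ends there, forcing $c(j)-c(j-1)\le-1$ by Lemma~\ref{l-DifferenceLevelCount} and contradicting balance. Hence in Step~$2$ the current level is always $\ge0$, so by Lemma~\ref{l-step2balanced} Step~$2$ can only stall at level~$0$; the run thus decomposes into exactly $k$ excursions, one begun by each level-$0$ arrow in the order dictated by $\phi_k^{-1}$, each returning to level~$0$, and it stops once all $k$ of them are finished. Let $S=S(T,\phi)$ be the set of arrows it labels. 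Second, by Lemma~\ref{l-step2balanced} (or trivially, if Step~$2$ does not stop early) every level-$0$ arrow lies in $S$ and $S$ is balanced; and since Step~$2$ always takes the rightmost unlabelled arrow at a positive current level, for each $j\ge1$ the level-$j$ arrows of $S$ are the rightmost ones among all level-$j$ arrows of $T$.

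It then suffices to show $S$ is the minimum, under inclusion, among all sub-diagrams $B$ of $T$ that (i) contain every level-$0$ arrow, (ii) are balanced, and (iii) for each $j\ge1$ contain precisely the rightmost ones among all level-$j$ arrows of $T$ --- a description involving no $\phi$. We have just seen $S$ has properties (i)--(iii). For any such $B$, I claim $S\subseteq B$, by induction along the Step~$2$ traversal: if every arrow labelled so far lies in $B$, so does the next one $A$. If $A$ is a level-$0$ arrow this is (i). Otherwise the current level is some $p\ge1$, $A$ is the rightmost unlabelled level-$p$ arrow, and the preceding labelled arrow (in $B$, by hypothesis) ends at $p$. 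Writing $s_p$ for the number of already-labelled arrows starting at $p$, the traversal has visited level~$p$ exactly $s_p+1$ times --- each arrival via an arrow ending at $p$, each but the last followed by a departure --- so $B$ contains at least $s_p+1$ arrows ending at $p$, hence, being balanced, at least $s_p+1$ arrows starting at $p$ by Lemma~\ref{l-DifferenceLevelCount}. The $s_p$ already-labelled level-$p$ arrows are the rightmost $s_p$ of all level-$p$ arrows and lie in $B$; as $B$'s level-$p$ arrows form a right-justified block of size $\ge s_p+1$, this block contains the next arrow leftward, namely $A$. This completes the induction, so $S\subseteq B$; since $S$ itself satisfies (i)--(iii), it is the minimum, and hence independent of $\phi_k^{-1}$.

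The main obstacle is precisely this inductive step: one must bookkeep arrivals versus departures at the current level carefully and combine the identity ``$B$ has equally many arrows starting and ending at $p$'' (balance, via Lemma~\ref{l-DifferenceLevelCount}) with the right-justification in (iii). Everything else is routine once properties (i)--(iii) are isolated.
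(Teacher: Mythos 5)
Your proof is correct, but it takes a genuinely different route from the paper's. The paper interprets $T$ as a directed multigraph (vertices are the occurring ranks, edges are the arrows; balance makes in-degree equal out-degree everywhere), observes that the labelled arrows form a closed Eulerian trail of a sub-multigraph $T'$ based at the level-$0$ vertex and using every edge incident to it, and then asserts that permuting the order of the departures from level $0$ still traverses exactly the edges of $T'$ --- so stability, which amounts to $T'=T$, is independent of $\phi_k^{-1}$. That key assertion (that reordering the excursions changes neither the number of visits to the other vertices nor the edges consumed there) is stated rather than argued in detail. You instead isolate a $\phi$-free extremal characterization of the labelled set $S$: it is the inclusion-minimum sub-diagram of $T$ that contains all level-$0$ arrows, is balanced, and is right-justified at every positive level; your arrival/departure count at the current level, combined with balance via Lemma \ref{l-DifferenceLevelCount} and the right-justification, supplies exactly the bookkeeping the paper's Eulerian-tour argument leaves implicit. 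Both arguments in fact establish the same stronger statement (the labelled set itself is independent of $\phi_k^{-1}$); yours buys a clean, order-free description of that set and a fully detailed induction, at the cost of introducing the auxiliary class of sub-diagrams $B$, while the paper's graph-theoretic phrasing is shorter but relies on the reader accepting the invariance of the tour under reordering. One cosmetic point: your preliminary positivity remark tacitly assumes $r_1=0$ (equivalently, that level-$0$ arrows exist); the degenerate case $k=0$ is trivially $\phi$-independent, so nothing is lost.
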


\begin{proof}
Consider an increasing balanced path diagram $T$ that is not stable. By Lemma \ref{l-step2balanced}, we may assume that Step 2 terminates at $A_{\pi(i)}$ with an end rank of $j = 0$, for $i < N$. The labeled arrows are $A_{\pi(1)}, A_{\pi(2)},\dots, A_{\pi(i)}$, and there are $k$ arrows $A_{\pi(i_1)}, A_{\pi(i_2)}, \dots, A_{\pi(i_k)}$ that start at level $0$.
We interpret $T$ as a directed graph with multiple edges and potential loops. The rank sequence of $T$ (where each rank occurs only once) constitutes its vertex set, and the set of arrows in $T$ forms its edge set. It is a property of $T$ that for every vertex, the outdegree is equal to the indegree.
For clarity, let us denote level $0$ as the vertex $j_0$, and let the end ranks of $A_{\pi(i_1)}, A_{\pi(i_2)}, \dots, A_{\pi(i_k)}$ be the distinct vertices $j_1, j_2, \dots, j_{k'}$. We disregard the possibility of loops at vertex $j_0$, as this does not impact our conclusion.
By Algorithm $\HPath^{\phi_k^{-1}}$, the selection of an arrow when leaving each vertex is uniquely determined.

We construct a directed graph $T'$ by choosing the edges $A_{\pi(1)},\dots, A_{\pi(i)}$. It is easy to see that $A_{\pi(1)},\dots, A_{\pi(i)}$ features an Eulerian tour of $T'$ that commences and concludes at vertex $j_0$, encompassing all edges originating from or terminating at vertex $j_0$. Altering the order of departure from vertex $j_0$ does not affect the entry and exit of vertices $j_{p}$ for $1 \leq p \leq k'$. Consequently, the reordering always yields an Eulerian tour of the directed graph $T'$.
When $T$ is stable, $T'$ is identical to $T$. This completes the proof.
\end{proof}

The following Proposition implies that increasing balanced path diagrams play a crucial role in
inverting the Order sweep map.

\begin{prop}\label{p-Bcrucialrole}
If $T$ is an increasing balanced path diagram, then Algorithm $\HPath^{\phi_k^{-1}}$ outputs a Dyck path $\bD=\HPath^{\phi_k^{-1}}(T)$ satisfying $\osweep(\bD)=\VPath(T)$.
\end{prop}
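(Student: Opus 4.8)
The plan is to verify that running $\osweep$ on the path $\bD = \HPath^{\phi_k^{-1}}(T)$ reconstructs $\VPath(T)$, by matching each step of the $\osweep$ decomposition against the way $\HPath^{\phi_k^{-1}}$ produced $\bD$. The conceptual content is that $\HPath^{\phi_k^{-1}}$ is the step-by-step inverse of $\HIB^{\phi_k}$: the labeling rule used inside $\HPath^{\phi_k^{-1}}$ (pick the $\phi_k^{-1}(\n)$-th arrow from the left at level $0$, otherwise the rightmost unlabeled arrow at the current level) is designed precisely so that, reading the labeled arrows $A_{\pi(1)}, A_{\pi(2)}, \dots, A_{\pi(N)}$ off in order, one recovers a path $\bD$ whose arrows, when re-sorted by the $\osweep$ order, return to the positions $A_1, \dots, A_N$ of $T$.

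The key steps, in order, are as follows. First I would record that by Lemma~\ref{l-step2balanced} and Lemma~\ref{l-updatebalanced}, Algorithm $\HPath^{\phi_k^{-1}}$ terminates: $T$ is repeatedly updated through stable stages, and on a stable diagram step~2 labels all $N$ arrows, so the permutation $\pi$ is well-defined and $\bD = (A_{\pi(1)}, \dots, A_{\pi(N)})$ is a genuine path. Since at each stage the labeled arrows form a Dyck path starting and ending at level $0$ (Lemma~\ref{l-step2balanced}) and the leftover arrows are shifted down by exactly the drop needed to keep the new diagram's first rank at $0$, concatenating the stages shows $\bD$ is a Dyck path. Second, and this is the heart of the argument, I would show that $\HIB^{\phi_k}(\bD) = T$. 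For this, compute the $\osweep$-labeling of $\bD$ as in the decomposition at the start of Section~\ref{s-Balanced tableaux}: the $\osweep$-label of an arrow of $\bD$ is determined by its starting rank in $\bD$, with ties at positive ranks broken right-to-left and ties at rank $0$ broken by $\phi_k$. I must check that the arrow occupying position $i$ in $T$ — namely $A_i$ — receives $\osweep$-label $i$. This is proved by induction on $i$ following the trajectory of step~2 of $\HPath^{\phi_k^{-1}}$: the current level after $i-1$ labeling steps equals the starting rank in $\bD$ of the $i$-th arrow to be swept; among all not-yet-swept arrows of $\bD$ with that starting rank, the $\osweep$ tie-break selects exactly $A_j = A_{\pi(i)}$ — right-to-left selection at a positive level matches "rightmost unlabeled arrow at the current level", and at level $0$ the $\phi_k$-order selection matches the $\phi_k^{-1}$-indexed left-to-right choice, since $\phi_k$ and $\phi_k^{-1}$ are mutually inverse. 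One also has to see that the level-$0$ arrows encountered across successive stable stages of the algorithm are swept in exactly the order $\n = 1, 2, \dots, k$ dictates, because each stable stage contributes one maximal initial Dyck sub-path of $\bD$ returning to level $0$, and the shift-down in step~3 is matched, under $\osweep$, by the drop in level that occurs when a Dyck sub-path of $\bD$ has been fully traversed. Third, once $\HIB^{\phi_k}(\bD) = T$ is established, the definition $\osweep(\bD) = \VPath(\HIB^{\phi_k}(\bD)) = \VPath(T)$ gives the conclusion immediately; note $\bD \in \cal D_M$ since it and $T$ share the same multiset of arrow vectors, and $\bD$ is a Dyck path.

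The main obstacle is the bookkeeping in the inductive claim "position $i$ in $T$ receives $\osweep$-label $i$" across the stage boundaries introduced by step~3. Within a single stable stage the correspondence between "current level in $\HPath^{\phi_k^{-1}}$" and "starting rank in $\bD$ under $\osweep$" is transparent, but one must carefully argue that when a stage ends (step~2 stalls at end rank $0$ with all level-$0$ arrows of that stage used up) and all remaining arrows drop one level, the $\osweep$ process on $\bD$ — which sorts by \emph{absolute} starting rank in $\bD$, first the nonnegative ranks in increasing order then the negative ranks — lines up stage $t+1$'s arrows, at their shifted ranks in $T$, against the correct block of $\bD$. This is really the statement that the "positive part first, then negative part" global order of $\osweep$ is reproduced by the successive one-level descents of step~3, and making this precise (e.g.\ by tracking, for each arrow, the relation between its rank in $T$ and its starting height along $\bD$) is where the care is needed; everything else is routine.
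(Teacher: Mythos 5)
Your overall skeleton is the same as the paper's: establish termination, show that $\HIB^{\phi_k}$ applied to the output path reproduces the diagram by matching the two tie-breaking rules (right-to-left at positive levels against ``rightmost unlabeled'', and $\phi_k$ against $\phi_k^{-1}$), and conclude via $\osweep(\bD)=\VPath(\HIB^{\phi_k}(\bD))$. However, there is a genuine gap, and it stems from a misreading of step 3 of Algorithm $\HPath^{\phi_k^{-1}}$: when step 2 stalls, \emph{all labels are erased} and the algorithm restarts on the shifted diagram. The output $\bD$ is therefore not assembled block-by-block across ``stages''; it is read off in a \emph{single} pass of step 2 over the final stable diagram $T'$ (which exists because each application of step 3 strictly decreases $\sum_i r_i$ while, by Lemma \ref{l-updatebalanced}, the diagram stays increasing with $r_1=0$, so the ranks are bounded below). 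Consequently the ``main obstacle'' you identify --- lining up stage $t{+}1$'s arrows against the correct block of $\bD$ and reconciling the shift-downs with the negative-rank part of the sweep order --- does not actually arise: within the stable pass the current level always equals the running partial sum of $\bD$, hence every starting rank along $\bD$ is nonnegative ($\bD$ is a Dyck path) and the ``negative ranks last'' clause of $\osweep$ is vacuous here.

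What your write-up is missing is the observation that makes the stage issue disappear: $\VPath$ depends only on the horizontal order of the arrows and their vectors $(1,b_i)$, so it is invariant under the vertical shifts of step 3; hence $\VPath(T')=\VPath(T)$, and it suffices to prove $\HIB^{\phi_k}(\bD)=T'$ --- not $=T$, which is false whenever step 3 is used, since $\HIB^{\phi_k}(\bD)$ carries the ranks of $T'$. With that reduction, the verification of the tie-breaks is exactly your second step restricted to the single stable pass (in particular, all $k$ level-$0$ arrows of $T'$ are handled within that one pass, with $\n$ running from $1$ to $k$, and $\phi_k\circ\phi_k^{-1}=\mathrm{id}$ restores their left-to-right order), which is precisely the paper's argument. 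As written, your claim $\HIB^{\phi_k}(\bD)=T$ together with the unresolved cross-stage bookkeeping leaves the argument incomplete.
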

\begin{proof}
By Lemma \ref{l-updatebalanced}, we know that step 3 updates $T$ to a new increasing balanced path diagram with $r_1=0$.
Since step 3 always shifts some arrows downwards, it can not be performed infinitely many times. Therefore, we finally reached the position when performing step 2 on the current increasing balanced path diagram $T$, all the $N$ arrows have been labelled. In other words, it becomes a stable balanced path diagram. This assertion implies that each increasing balanced $T$ will be updated to a unique stable one.
Then setting $\oD=A_{\pi(1)}\cdots A_{\pi(N)}$ clearly gives a Dyck path.  On the other hand, the
increasing balanced path diagram $\HIB^{\phi_k}(\oD)$ obtained from $\oD$ through horizontal shifts is exactly equal to $T$. The increasing property forces the relative order for arrows of different ranks. For two arrows of the same rank $> 0$, $A_{\pi(a)}$ appears to the right of $A_{\pi(b)}$ if $a<b$, and their order is reversed in $\oD$. But in $\HIB^{\phi_k}(\oD)$ their order is also reversed according to their position in $\oD$.

There is a difference for arrows with starting rank $0$. Assume there are $k$ such arrows in a stable $T$, say $B_1,\dots,B_k$ from left to right. Then
their relative order in $\oD$ is $B_{\phi_k^{-1}(1)},\dots, B_{\phi_k^{-1}(k)}$ from left to right. Now in
$\osweep(\oD)$, the relative order of these arrows becomes $B_{\phi_k(\phi_k^{-1}(1))},\dots, B_{\phi_k(\phi_k^{-1}(k))}$, which is just
 $B_1,B_2,\dots, B_k$. Thus $T$ and $\HIB^{\phi_k}(\oD)$ have to be identical and we have $\osweep(\oD)=\VPath(T)$ as desired.
\end{proof}

\section{The Increasing Balanced path diagram obtained by Vertical Shifting}\label{s-IBT}
Given a general Dyck path $D$, we have reduced to finding an increasing balanced path diagram $T$ such that $\VPath(T)=D$.
This can be achieved by the following algorithm.

\noindent
Algorithm \VIB\ :

\noindent
\textbf{Input:} A positive path diagram $T(D,R^{(0)})$, where $D$ is a Dyck path in  $\cal D_{M}$,
and $R^{(0)} =(r_1^{(0)},r_2^{(0)},\ldots ,r_{N}^{(0)})$ is any weakly increasing (rank) sequence.

\noindent
\textbf{Output:} An increasing balanced path diagram $T(D,\overline{R})$.

\begin{enumerate}
\item[Step 1]  Starting with $T(D,R^{(0)})$ repeat the following step
until the resulting path diagram is balanced.
\vskip .1in

\item[Step 2]
{ In $T(D,R^{(s)})$,  with  $R^{(s)} =(r_1^{(s)},r_2^{(s)},\ldots ,r_{N}^{(s)}),$  find the lowest row $j$ with $c(j)>0$ and
find the rightmost arrow that starts at level $j$. We say that we are \emph{working} with row $j$. Suppose that arrow
starts at $(i,j)$. Move up the arrow one level to construct the tableau  $T(D,R^{(s+1)})$ with $ r_{i}^{(s+1)}= r_{i}^{(s)}+1$
and  $ r_{i'}^{(s+1)}= r_{i'}^{(s)}$ for all $i'\neq i$.
If all the row counts are $\le 0$ then stop the algorithm, since all row counts
must  necessarily vanish.}
\end{enumerate}



For a path diagram $T(D,R^{(0)})$ as in Algorithm $\VIB$, let $U$ be the maximal value in set $S = \{r_i^{(0)}+b_i | b_i > 0\} \bigcup \{r_j^{(0)}| b_j < 0\}$. In other words, $U$ is the number of the highest level of the path diagram $T(D,R^{(0)})$ achieving.
It also makes the following key observation evident.
\begin{lem}\label{l-keep-nonnegative}
If  at some point  $c(k)$ becomes $\ge 0$ then for ever after it will never become negative. In particular, since $c(k)= 0$ with $k>U$ for the initial path diagram $T(D,R^{(0)})$ we will have $c(k)\ge 0$ when $k>U$ for all successive path diagrams produced by the algorithm.
\end{lem}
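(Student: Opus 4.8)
The plan is to track how a single step of Algorithm $\VIB$ changes the row counts $c(j)$, and to argue that the quantity $c(k)$ can only fail to increase when it is strictly positive. First I would recall from Lemma \ref{l-DifferenceLevelCount} that in any path diagram $c(j)-c(j-1)$ equals the number of arrows starting at level $j$ minus the number ending at level $j$. Now consider the step in which we work with row $j$ and move the rightmost arrow $A$ starting at $(i,j)$ up by one level, so its starting point becomes $(i,j+1)$ and its end rank also increases by one. The key point is that moving $A$ up by one changes exactly one red segment (if $A$ is an up vector) or one blue segment (if $A$ is a down vector) between two consecutive rows: precisely, if $A$ is red then $c^r$ changes only in the row that $A$ vacates at the bottom and the row it newly occupies at the top; symmetrically if $A$ is blue. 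Either way, the effect on the counts $c(\cdot)$ is to decrease $c(j)$ by $1$ and increase $c(j')$ by $1$ for exactly one other row $j'$ — namely the row where the top of $A$ moves (for a red arrow, $j'$ is the end rank of $A$ after the move, which is $> j$; for a blue arrow, $j' $ is the old starting level minus the length, which is $< j$). [Here I would insert a short case check, with a picture analogous to Figure \ref{fig:FourCases}, confirming that $c(j)$ decreases by $1$ and some other single $c(j')$ increases by $1$, and all remaining row counts are unchanged.]

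Granting this, fix a level $k$ and suppose that at some stage $c(k)\ge 0$. I claim $c(k)$ never becomes negative afterwards. The only step that could decrease $c(k)$ is one in which we are working with row $k$ itself — i.e. $j=k$ in the description above — and by the selection rule in Step 2 this only happens when $c(k)>0$ at that stage. But then after the step $c(k)$ drops to $c(k)-1\ge 0$. In every other step $c(k)$ either stays the same or increases by $1$. Hence $c(k)$ can never go below $0$ once it has reached $0$: the only moves that lower it require it to be at least $1$ beforehand. This proves the first assertion.

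For the "in particular" clause, I would invoke the definition of $U$ as the highest level reached by the initial diagram $T(D,R^{(0)})$: no arrow of that diagram has any segment in row $k$ for $k\ge U$, so $c^r(k)=c^b(k)=0$ and hence $c(k)=0$ for all $k>U$ at the start. Applying the first part of the lemma with this starting value $c(k)=0$ gives $c(k)\ge 0$ for all $k>U$ throughout the run of the algorithm. The main obstacle I anticipate is purely bookkeeping: verifying cleanly that one elementary up-move changes the row counts in exactly the claimed way — decrementing $c(j)$ by one and incrementing exactly one other $c(j')$ by one — and in particular that it never touches $c(k)$ for our fixed $k$ unless $j=k$ or $j'=k$, together with checking that the case $j'=k$ can only *increase* $c(k)$. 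Once that local analysis is pinned down (it is essentially the same four-case picture already used for Lemma \ref{l-DifferenceLevelCount}), the monotonicity argument is immediate.
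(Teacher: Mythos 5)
Your argument is correct and is essentially the paper's own (one-sentence) proof, expanded: the paper simply observes that a row count is only decreased when it is positive, which is exactly the monotonicity fact your local case analysis establishes. The only case you leave implicit is that the moved arrow might be a level (purple) arrow, which changes no row counts at all and therefore cannot harm the conclusion.
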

\begin{proof}
This Lemma follows that we only decrease a row count when it is positive.
\end{proof}

We need  to justify the algorithm by using some basic properties. 
\begin{lem}\label{l-basic-property}
We have the following basic properties.
\begin{enumerate}
\setlength\itemsep{2mm}
\item[$(i)$] { If row $j$ is the lowest with $c(j)>0$ then there is at least one arrow that starts at level $j$.}

\item[$(ii)$] { The successive rank sequences are  always weakly  increasing.}

\item[$(iii)$]  { If $T(D,R^{(s)})$ has no positive row counts, then it is balanced. Consequently, if the algorithm terminates, the last path diagram is balanced.}
\end{enumerate}
\end{lem}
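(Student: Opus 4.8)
The three properties to prove are all fairly short bookkeeping facts about the VIB algorithm, so I would organize the proof of Lemma~\ref{l-basic-property} as three self-contained paragraphs, each leaning on Lemma~\ref{l-DifferenceLevelCount} and Lemma~\ref{l-keep-nonnegative}.

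\medskip

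\noindent\textbf{Plan for $(i)$.} The plan is to use Lemma~\ref{l-DifferenceLevelCount} at level $j$: $c(j)-c(j-1)=\#\{\text{arrows starting at level }j\}-\#\{\text{arrows ending at level }j\}$. Since $j$ is the \emph{lowest} row with $c(j)>0$, we have $c(j-1)\le 0$, so $c(j)-c(j-1)\ge 1>0$, which forces $\#\{\text{arrows starting at level }j\}\ge \#\{\text{arrows ending at level }j\}+1\ge 1$. I would add the remark that $c(j-1)$ is indeed well-defined and $\le 0$: for rows below the initial path (i.e. $j-1$ small enough) the count is $0$, and by minimality of $j$ every row strictly below $j$ has nonpositive count; for $j\le 0$ one can alternatively invoke that the path portion below level $0$ contributes nothing problematic, but the cleanest argument is simply: $j$ lowest with $c(j)>0$ means $c(j-1)\le 0$. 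This gives at least one arrow starting at level $j$, so Step~2 is well-defined.

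\medskip

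\noindent\textbf{Plan for $(ii)$.} Here I would argue by induction on $s$, the number of steps performed. The base case $R^{(0)}$ is weakly increasing by hypothesis. For the inductive step, Step~2 moves up by one level the \emph{rightmost} arrow $A_i$ starting at level $j$, i.e.\ it increments $r_i$ to $r_i+1$. Weak monotonicity can only be destroyed at the junctions $r_{i-1}\le r_i$ and $r_i\le r_{i+1}$. The first, $r_{i-1}\le r_i+1$, is immediate from $r_{i-1}\le r_i$. For the second, I claim $r_{i+1}>r_i$ \emph{before} the move, hence $r_{i+1}\ge r_i+1$ afterward: if instead $r_{i+1}=r_i=j$, then $A_{i+1}$ would also start at level $j$ and lie to the right of $A_i$, contradicting the choice of $A_i$ as the rightmost arrow starting at level $j$. (If several equal ranks occur, one takes $A_i$ to be the last index among them, which is exactly ``rightmost''.) So the new sequence is again weakly increasing.

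\medskip

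\noindent\textbf{Plan for $(iii)$ and the expected obstacle.} The statement ``no positive row counts $\Rightarrow$ balanced'' is really the assertion that the row counts cannot all be $\le 0$ with some $<0$; i.e. $\sum_j c(j)$ considerations plus Lemma~\ref{l-keep-nonnegative}. The plan: by Lemma~\ref{l-keep-nonnegative}, once a row count reaches $\ge 0$ it stays $\ge 0$; and the initial diagram $T(D,R^{(0)})$, having $D$ a Dyck path, starts with all rows at level $\ge$ its top being $0$ and---more importantly---one checks that every row count that ever becomes $0$ stays $\ge 0$ thereafter, while the algorithm only ever \emph{decreases} a strictly positive row count by moving an arrow out of it. The key point is that moving the rightmost arrow starting at level $j$ up one level changes $c(j)$ by $-1$ and $c(j')$ for $j'\ne j$ by a nonnegative amount near the relevant endpoints---more precisely it affects at most the counts $c(j)$ (decreasing by $1$) and possibly $c(\text{new end rank})$ type rows; one then observes that if all $c(j)\le 0$ then since each individual $c(j)\ge$ its value is bounded and, because the total ``excess'' $\sum_{j:c(j)>0}c(j)$ strictly decreases at each step, the process is consistent; the cleanest route is: suppose $c(j_0)<0$ for some $j_0$ while all $c(j)\le 0$. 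Pick $j_0$ and note $c(j_0)-c(j_0-1)$ equals a difference of arrow-endpoint counts by Lemma~\ref{l-DifferenceLevelCount}; summing the telescoping identity $\sum_{j\le t}\big(c(j)-c(j-1)\big)=c(t)$ over a range covering the whole path and using that arrows start and end matched up (the path $D$ is closed, starting and ending at level $0$, so total starts $=$ total ends at every level in aggregate) yields $\sum_j c(j)\cdot(\text{something})$; a contradiction with some $c(j)<0$, all others $\le 0$, since one shows $c(\text{top})=0$ always and $c$ changes by $\pm1$ between consecutive rows so a single negative value with no positive values is incompatible with $c$ vanishing at both extremes. I expect the genuine obstacle to be exactly this last point---pinning down \emph{why} ``all $c(j)\le 0$'' already forces ``all $c(j)=0$''. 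I would resolve it by the boundary-vanishing argument: $c(j)=0$ for all sufficiently large $j$ (Lemma~\ref{l-keep-nonnegative}, or directly) and for all sufficiently small $j$ (nothing is below), and $c(j)-c(j-1)$ is an integer that, summed telescopically from $-\infty$ to $+\infty$, gives $0$; hence $\sum_j\big(\text{starts at }j-\text{ends at }j\big)=0$, and if every $c(j)\le 0$ with the function returning to $0$ at both ends, a standard discrete argument (the function $c$ is the partial-sum sequence of the start/end differences and must be identically $0$) forces $c\equiv 0$. The final sentence of $(iii)$---``if the algorithm terminates, the last path diagram is balanced''---is then immediate, since termination in Step~2 happens precisely when all row counts are $\le 0$.
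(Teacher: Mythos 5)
Your parts $(i)$ and $(ii)$ are correct and essentially identical to the paper's argument: $(i)$ is exactly the observation $c(j)>0$, $c(j-1)\le 0$, hence $c(j)-c(j-1)>0$ combined with Lemma~\ref{l-DifferenceLevelCount}; and $(ii)$ is the paper's one-line remark about the choice of the rightmost arrow, which you correctly flesh out (the point that $r_{i+1}>r_i$ before the move, by rightmost-ness, is the whole content).

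Part $(iii)$, however, contains a genuine gap. Your final resolution rests on the claim that if $c(j)\le 0$ for all $j$ and $c$ vanishes for all sufficiently large and sufficiently small $j$, then $c\equiv 0$. That implication is false as a general principle: the sequence $\ldots,0,0,-1,0,0,\ldots$ vanishes at both extremes, is everywhere nonpositive, and is not identically zero; nothing in ``$c$ is the partial-sum sequence of the start/end differences'' rules this out, since \emph{any} finitely supported sequence is the partial-sum sequence of its own differences, and the telescoping identity $\sum_j(\text{starts at }j-\text{ends at }j)=0$ is trivially true (each arrow has one start and one end) and carries no information. Likewise your claim that $c$ changes by $\pm 1$ between consecutive rows is not true in general. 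The missing ingredient --- which is the entire content of the paper's proof of $(iii)$ --- is that the \emph{sum} of all row counts is zero: $\sum_j c(j)=\sum_j c^r(j)-\sum_j c^b(j)=\sum_{b_i>0}b_i+\sum_{b_i<0}b_i=\sum_i b_i=0$, because the total number of red cells equals the total number of blue cells for a closed path. You allude to ``$\sum_j c(j)$ considerations'' at the outset but never actually compute this sum or use it; once you do, ``all $c(j)\le 0$ and $\sum_j c(j)=0$'' immediately forces $c(j)=0$ for all $j$, and the terminal statement follows since Step~2 halts exactly when no row count is positive. Note also that Lemma~\ref{l-keep-nonnegative} is not needed for $(iii)$ at all.
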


\begin{proof}\

\begin{enumerate}\setlength\itemsep{2mm}

\item[$(i)$]    By the choice of $j$, we have $c(j)>0$ and $c(j-1)\le 0$. Thus $c(j)-c(j-1)>0$, which by Lemma \ref{l-DifferenceLevelCount}, shows that there is at least one arrow starting at rank $j$.

\item[$(ii)$]   Our choice of $i$ in Algorithm $\VIB$ step $2$ assures that the next rank
sequence remains weakly increasing.

\item[$(iii)$]  Since each of our tableaux  $T(D,R^{(s)})$ has the same number of red segments and blue segments, the total sum of row counts of any $T(D,R^{(s)})$ has to be $0$. Thus if $T(D,R^{(s)})$ has no positive row counts, then it must have no negative row counts either, and is hence balanced.
 \end{enumerate}
\vspace{-8mm}
 \end{proof}

\begin{proof}[Justification of Algorithm \VIB]
By Lemma \ref{l-basic-property}, we only need to show that the algorithm terminates.
To prove this we need the following auxiliary result.

\begin{lem}\label{l-redcount}
Suppose we are working with row $k$, that is  $c(k)>0$ and $c(i)\le 0$ for all $i<k$. If row $\ell $ has no segments for some $\ell <k$, then in the current path diagram $T(D,R')$, it holds that
\begin{align}\label{e-eq-terminate}
  c(0) = c(1) = \cdots = c(\ell-1) = 0.
\end{align}
\end{lem}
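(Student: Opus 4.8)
The plan is to use the empty row $\ell$ to split the arrows of the current path diagram $T(D,R')$ into two groups: the \emph{low} arrows, those lying entirely at levels $\le\ell$, and the \emph{high} arrows, those lying entirely at levels $\ge\ell+1$. Since row $\ell$ contains no segment, no (non-level) arrow can have one endpoint at level $\le\ell-1$ and another at level $\ge\ell+1$, nor an endpoint exactly at level $\ell$ together with one on the other side; hence every arrow is either low or high, and level vectors may simply be counted among the low ones if they sit at a level $\le\ell$. Next I would observe that a low arrow has starting rank $\le\ell$ while a high arrow has starting rank $\ge\ell+1$. Because the rank sequence of $T(D,R')$ is weakly increasing (Lemma~\ref{l-basic-property}$(ii)$), this strict separation of starting ranks forces all low arrows to precede all high arrows in the sequence $(A_1,\dots,A_N)$. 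Thus the low arrows are precisely $A_1,\dots,A_p$ for some $p$, carrying the values $b_1,\dots,b_p$ of the Dyck path $D=\VPath(T(D,R'))$.

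The key step is then to evaluate $b_1+\cdots+b_p$ in two ways. Since $D$ is a Dyck path, this partial sum is $\ge 0$. On the other hand, $b_1+\cdots+b_p$ equals (the number of red segments among the low arrows) minus (the number of blue segments among the low arrows); but the low arrows occupy exactly the rows at levels $\le\ell-1$, and these rows contain no segments of high arrows, so this difference equals $\sum_{j\le\ell-1}c(j)$ (a finite sum, as $c(j)=0$ below the whole diagram). Hence $\sum_{j\le\ell-1}c(j)\ge 0$. Conversely, the hypothesis that we are working with row $k$ gives $c(j)\le 0$ for every $j<k$, in particular for every $j\le\ell-1$ since $\ell<k$, so $\sum_{j\le\ell-1}c(j)\le 0$. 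Combining the two inequalities, $\sum_{j\le\ell-1}c(j)=0$ while each summand is $\le 0$, which forces $c(j)=0$ for all $j\le\ell-1$; in particular $c(0)=c(1)=\cdots=c(\ell-1)=0$.

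I expect the main care to be needed in the prefix argument: one must rule out a non-level arrow having an endpoint exactly at level $\ell$ (it would pass through row $\ell$), and check that level vectors, which carry no segments, affect neither the segment counts nor the value $b_1+\cdots+b_p$ — so parking them at level $\ell$ and calling them low is harmless. It is also worth flagging the two places where the hypotheses are genuinely used: the separation of starting ranks relies on the weakly increasing rank sequence, and the inequality $b_1+\cdots+b_p\ge 0$ is exactly the Dyck condition (for a general Free path a low prefix could have negative partial sum and the conclusion would fail); and the bound $c(j)\le 0$ is applied to negative indices $j$ as well, so no assumption on the sign of $r_1$ is required.
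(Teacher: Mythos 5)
Your proof is correct and is essentially the paper's own argument: both use the empty row $\ell$ together with the weakly increasing rank sequence to show that everything lying below row $\ell$ comes from a prefix of $D$, and then trap $c(0)+\cdots+c(\ell-1)$ between the Dyck partial-sum bound ($\ge 0$) and the working-row hypothesis ($\le 0$), forcing every term to vanish. The only cosmetic difference is that you organize the separation as a low/high partition of the arrows, whereas the paper picks the rightmost arrow with a segment below row $\ell$ and argues via its four-region picture; your phrasing even spares the case split and the figure, and the one loose sentence (``rule out a non-level arrow having an endpoint exactly at level $\ell$'') should be read as in your first paragraph, i.e.\ only endpoints at level $\ell$ with the other endpoint strictly above need excluding.
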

\begin{proof}\hspace{-2cm}\vspace{-1cm}\addpic[.38\textwidth]{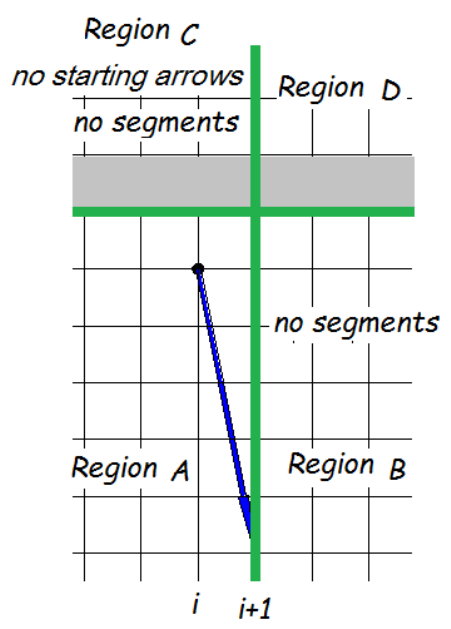}{When row $\ell$ has no segments, there will be no segments in regions $B$ and  $C$.\label{fig:emptyrow}}
\vspace{.1cm} \hspace{5mm}

The conclusion of this Lemma is different from that in \cite[Lemma 6]{Rational-Invert}, but the proof is very similar.

If there are no segments below row $\ell$ in the current path diagram $T(D,R')$, then the Lemma holds true.
Now, we suppose that $T(D,R')$ has a segment below row $\ell$, then we only need to show $\sum_{0\le j <\ell } c(j) =0.$
Let $\bar{A}$ be the rightmost arrow containing such a segment, starting at column $i$.
Since row $\ell$ has no segments, the starting rank of $\bar{A}$ must be $\le \ell$. This implies that $i+1<N$ since the arrow starting at level $k$ must be to the right of $\bar{A}$ (by the increasing property of $R'$).  This given,  the current path diagram could look like in
Figure \ref{fig:emptyrow}, where the two green lines divide the plane into 4 regions, as labelled in the display. The gray row is row $\ell$.

The weakly increasing property of $R'$  implies no starting ranks in  $C$, therefore there are no segments there. Due to the choice of $\bar{A}$, there can be no segments in $B$ either.
Thus the  (gray) empty row $\ell$  forces no segments within both $B$ and $C$.

Now observe that since $D\in \mathcal D_{M}$, the difference between the number of red segments to the left of column $i+1$ and the number of blue segments to the left of that column must be a non-negative number $s\ge 0$.
However, since $c(j)\le 0$ for all $j\le \ell$ it follows that $c(0)+\cdots +c(\ell-1)\le 0$. Additionally, as regions $B$ and $C$ contain no segments, it follows that $s=c(0)+\cdots +c(\ell-1)\le 0$. This implies $s=0$ and \eqref{e-eq-terminate} holds true.
\end{proof}

 \vskip -.1in
Next observe that since each step of the algorithm increases one of the ranks by one unit, after $K$ steps we will have  $|R^{(K)}-R^{(0)}|=\sum_{i=1}^{N}r_i^{(K)}-\sum_{i=1}^{N}r_i^{(0)} =K$. With this in mind, if the algorithm iterates Step 2 forever, then the maximum rank  will eventually exceed any given integer. In particular, we will reach a point where we are working with row $k$, and $k$ is so large that $k-U$ exceeds the total number $\sum_{i=1, b_i>0}^N b_i$ of red segments.
At this stage, we will have $c(k)>0$ and  $c(j)=0$ for all the $k-U$ values $j=U,U+1,\dots, k-1$.
The reason for this is that we must have $c(j)\le 0$ for all $0\le j < k$ and by Lemma \ref{l-keep-nonnegative} we must also have
$c(j)\ge 0$ for $j\ge U$. Now, by the  pigeon hole principle, there must also be
some $U\le \ell <k$ for which $c^r(\ell )=0$. But then it follows that $c^b(\ell)=c^r(\ell)-c(\ell)=0$, too. That implies that row  $\ell$ contains no segments. Utilizing Lemma \ref{l-redcount}, we obtain equation \eqref{e-eq-terminate}. Therefore, the total row count is given by $\sum_{j\ge 0} c(j) = \sum_{j\ge \ell} c(j) \ge c(k)>0$, a contradiction.

\end{proof}

\def\TR{\widetilde{R}}
\def\tr{\tilde{r}}
\section{The tightness of Algorithm \VIB}\label{s-VIB}
For  two rank sequences $R=(r_1,r_2,\ldots ,r_{N})$ and
$R'=(r_1',r_2',\ldots ,r_{N}')$ let us write $R\preceq R'$ if and only if
we have $r_i\le r_i'$ for all $1\le i\le N$; if $r_i< r_i'$ for at least one $i$
we will write $R\prec R'$. The \emph{distance} of $R$ from $R'$,
will be expressed by the integer
 $$
| R'-R| \, =\, \sum_{i=1}^{N} (r_i'-r_i)=\, \sum_{i=1}^{N} r_i'- \sum_{i=1}^{N} r_i.
$$

Given a general Dyck path $D$, we can obtain the minimal path diagram $T(D,R^{(0)})$ by vertically shifting the arrows. We
call the initial starting sequence $R^{(0)}$ \emph{canonical for $D$}. Let $T(D,\TR)$ be the increasing balanced path diagram obtained by Algorithm \VIB \ from $T(D,R^{(0)})$. Then, we have the following theorem.

\begin{theo}\label{t-tight}
If $R$ is any  increasing sequence which satisfies the inequalities
$$
R^{(0)} \preceq R\preceq \TR
$$
then the \VIB \ algorithm with starting path diagram $T(D,R)$
will have as output the rank sequence $\TR$.
\end{theo}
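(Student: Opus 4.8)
The plan is to reduce Theorem~\ref{t-tight} to a single ``ceiling is respected'' statement about the behaviour of Algorithm~$\VIB$. Throughout, write $c_R(j)$ for the count of row $j$ in the path diagram $T(D,R)$.

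\emph{Key Claim.} Let $S$ be any weakly increasing balanced rank sequence for $D$. If Algorithm~$\VIB$ is run starting from $T(D,R)$ with $R$ weakly increasing and $R\preceq S$, then every rank sequence it produces along the way is $\preceq S$.

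Granting the Key Claim, the theorem follows quickly. First, $R^{(0)}\preceq R$ already forces $T(D,R)$ to be a positive path diagram, because $r_k^{(0)}\ge -b_k$ gives $r_k+b_k\ge 0$; hence Algorithm~$\VIB$ applies to $T(D,R)$. Taking $S=\TR$ in the Key Claim, the run from $T(D,R)$ never leaves the finite box $\{Q\mid R\preceq Q\preceq\TR\}$; since each step strictly increases $\sum_k r_k$, the run terminates, and by Lemma~\ref{l-basic-property} its output $T(D,\overline Q)$ has $\overline Q$ weakly increasing, balanced, and $R\preceq\overline Q\preceq\TR$. Now apply the Key Claim a second time, with $S=\overline Q$ (legitimate, as $\overline Q$ is weakly increasing and balanced) and with starting diagram $T(D,R^{(0)})$, using $R^{(0)}\preceq R\preceq\overline Q$: since the $\VIB$-run from $T(D,R^{(0)})$ terminates at $T(D,\TR)$ by the very definition of $\TR$, we obtain $\TR\preceq\overline Q$. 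Together with $\overline Q\preceq\TR$ this gives $\overline Q=\TR$, which is the assertion of the theorem.

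I would prove the Key Claim by induction on the step index $s$, with invariant $R^{(s)}\preceq S$; here $R^{(0)},R^{(1)},\dots$ denote the successive rank sequences of the run, so $R^{(0)}=R$ and the base case is exactly the hypothesis $R\preceq S$. For the inductive step, suppose $R^{(s)}\preceq S$ and the algorithm performs one more step: it picks the lowest row $j$ with $c_{R^{(s)}}(j)>0$ (such a row exists by Lemma~\ref{l-basic-property}(iii), since $R^{(s)}$ is not yet balanced) and the rightmost arrow $A_i$ of $T(D,R^{(s)})$ starting at level $j$ (it exists by Lemma~\ref{l-basic-property}(i)), and produces $R^{(s+1)}$ with $r_i^{(s+1)}=j+1$ and all other coordinates unchanged. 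Since $r_i^{(s)}=j\le s_i$, it suffices to rule out $s_i=j$. The tool is the cumulative form of Lemma~\ref{l-DifferenceLevelCount}, valid for any path diagram and obtained by telescoping from a level low enough that the count vanishes:
\begin{equation}\label{e-plan-count}
c_R(j)=\#\{k\mid r_k\le j\}-\#\{k\mid r_k+b_k\le j\},
\end{equation}
that is, $c_R(j)$ is the number of arrows of $T(D,R)$ starting at a level $\le j$ minus the number ending at a level $\le j$. Suppose for contradiction that $s_i=j$. Because $A_i$ is the \emph{rightmost} arrow of $T(D,R^{(s)})$ starting at level $j$ and $R^{(s)}$ is weakly increasing, the set $\{k\mid r_k^{(s)}\le j\}$ equals $\{1,\dots,i\}$; and since $s_i=j$ with $S$ weakly increasing and $S\succeq R^{(s)}$, likewise $\{k\mid s_k\le j\}=\{1,\dots,i\}$. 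Substituting into \eqref{e-plan-count}, in $T(D,R^{(s)})$ we get $i-\#\{k\mid r_k^{(s)}+b_k\le j\}=c_{R^{(s)}}(j)>0$, so at most $i-1$ arrows of $T(D,R^{(s)})$ end at a level $\le j$; while balancedness of $S$ gives $i-\#\{k\mid s_k+b_k\le j\}=c_S(j)=0$, so exactly $i$ arrows of $T(D,S)$ end at a level $\le j$. But $R^{(s)}\preceq S$ forces every arrow that ends at a level $\le j$ in $T(D,S)$ to do so in $T(D,R^{(s)})$ too, hence $i\le i-1$, a contradiction. Therefore $s_i\ge j+1$, so $R^{(s+1)}\preceq S$, which completes the induction.

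The step I expect to require the most care is this last one: setting up \eqref{e-plan-count} correctly---level (purple) arrows must be counted on neither side, which is precisely what makes the right-hand side agree with $c_R(j)=c^r(j)-c^b(j)$---and, above all, the observation that the ``rightmost arrow'' rule of Algorithm~$\VIB$ pins the prefix $\{1,\dots,i\}$ to be \emph{exactly} the set of columns of rank $\le j$, simultaneously in the running diagram and in the hypothetical ceiling $S$. Once those two points are secured, the contradiction is the short monotonicity computation above, and everything else---positivity of the starting diagram, termination inside a finite box, and the two bookkeeping applications of the Key Claim---is routine given the lemmas already established.
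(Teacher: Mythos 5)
Your proof is correct, but it follows a genuinely different route from the paper's. The paper proves the theorem by induction on the distance $|\TR-R|$: assuming a single application of Step 2 to $R$ would overshoot $\TR$ at some coordinate $i$, it constructs an auxiliary sequence $R''$ by incrementing a different coordinate, invokes the inductive hypothesis on $R''$, and derives a contradiction by tracking the arrow $A_i$ (which must remain the rightmost arrow starting at level $k$) through the entire subsequent run of the algorithm. Your argument instead isolates a local monotonicity statement --- the Key Claim that \emph{any} weakly increasing balanced ceiling $S\succeq R$ is never exceeded --- and proves it by a one-step counting argument based on the cumulative form $c_R(j)=\#\{k\mid r_k\le j\}-\#\{k\mid r_k+b_k\le j\}$ of Lemma \ref{l-DifferenceLevelCount}, with the ``rightmost arrow'' rule pinning both $\{k\mid r_k^{(s)}\le j\}$ and $\{k\mid s_k\le j\}$ to the prefix $\{1,\dots,i\}$; the theorem then follows from a two-sided squeeze (run from $R$ to get $\overline Q\preceq\TR$, then run from $R^{(0)}$ under the ceiling $\overline Q$ to get $\TR\preceq\overline Q$). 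This buys you several things: the contradiction is entirely local (no need to follow the algorithm's future evolution), termination of the run from $T(D,R)$ falls out of boundedness inside a finite box rather than being imported from the general termination proof, and as a by-product you obtain the stronger characterization that $\TR$ is the pointwise minimum of all weakly increasing balanced rank sequences dominating $R^{(0)}$, which the paper never states. The paper's induction is shorter to write but leans on an unformalized ``sooner or later the algorithm must work on row $k$'' step; your version replaces that with the explicit counting identity. I checked the delicate points you flagged --- the cancellation of level arrows in the cumulative count, and the identification of the prefix $\{1,\dots,i\}$ simultaneously in $R^{(s)}$ and in $S$ --- and both are sound.
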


\begin{proof}
If $|\TR-R|=0$, there is nothing to prove. Therefore, we will proceed
by induction on the distance of $R$ from $\TR$.
Now assume the theorem holds for $|\TR-R|=K$.
We need to show that it also holds for $|\TR-R|=K+1$.
Suppose one application of step (2) on $R$ gives $R'$.
We aim to show that $R'\preceq \TR $. This done since $R$ and $R'$ only differ from one unit  we will have $|\TR-R'|=K$ and then the inductive hypothesis would complete the proof.

Thus assume if possible that this step (2) cannot be carried out because it requires increasing by one unit an $r_i=\tr_i$ .
Suppose further that under this  step (2) the level $k$ was the lowest with $c(k)>0$ and thus the arrow $A_i$ was the right most that started at level $k$. In particular this means that $r_i=\tr_i=k$. Since $|  \TR-R|\ge 1$, there is at least one $i'$ such that $r_{i'}<\tr_{i'}$. If $r_{i'}=k'$ let $i'$ be the right most
with $r_{i'}=k'$. Define $R''$ to be the rank sequence obtained by replacing  $r_{i'}$ by
$r_{i'}+1$ in $R$. The row count $c(k')$ is decreased by $1$ and another row count is increased by $1$, so that $c(k)$ in $R''$ is still positive.
Since $|\TR-R''|=K$ the induction hypothesis assures that
the  \VIB \ algorithm will return $\TR$. But then in carrying this out, we have to work on row $k$, sooner or later, to decrease the positive row count $c(k)$. But there is no way the arrow $A_i$ can stop being the right most starting at level $k$, since arrows to the right of $A_i$ start at a higher level than $A_i$  and  are only moving upwards.   Thus the fact that   the  \VIB \  algorithm outputs  $\TR$ contradicts the step (2) applied to $R$ cannot be carried out.
Thus we will be able to lift $A_i$ one level up as needed and obtain the sequence $R'$ obtained by replacing $r_i$ by $r_i+1$ in $R$.
But now  we will have $|\TR-R'|$ =K$ $ with  $R'\prec \TR$. By the inductive hypothesis, the \VIB\ algorithm starting from $R$ will return $\TR$ as desired, completing the proof.
\end{proof}

Now we can describe the algorithm for inverting the Order sweep map with a sequence of permutations $\phi = (\phi_1,\phi_2,\dots)$.

\noindent
Algorithm {\bf InvOSweep} :

\noindent
Input: Dyck path $D$.

\noindent
Output: Dyck path $\oD$ with $\osweep(\oD)=D$.
\begin{enumerate}
\item Obtain the minimal path diagram $T(D,R)$ by vertically shifting the arrows

\item Compute the increasing balanced path diagram $\VIB(T)$, say equal to $T(D,\bar{R})$.

\item Compute $\HPath^{\phi_k^{-1}}(D,\bar R)$. This is the desired Dyck path $\oD$.
\end{enumerate}

By the tightness of Algorithm \VIB, we can claim that Step 2 indeed gives a stable balanced path diagram so that step 3 in Algorithm $\HPath^{\phi_k^{-1}}$ is not used. Suppose that Step 2 gives a balanced path diagram $T = T(D, \bar {R})$ which is not stable. By Lemma \ref{l-updatebalanced}, we can obtain a new increasing balanced path diagram $T' = T(D, \bar R')$ by Algorithm $\HPath^{\phi_k^{-1}}$ step $3$ such that $R \preceq \bar{R}' \preceq \bar{R}$, which is a contradiction.

\begin{rem}
It might be better to guess a starting $R$ and do the computation.
\end{rem}

\begin{proof}[Proof of Theorem \ref{t-mainresult}]
By Corollary \ref{c-Orderimage}, the Order sweep image of a general Dyck path is a general Dyck path, i.e, it is into.
Since the collection $\cal D_M$ of a given multiset $M$ is finite, we only show that
 given a Dyck path $D \in \cal D_M$, one can obtain $\oD \in \cal D_M$ such that $\osweep(\oD) = D$. This can be done by Algorithm {\bf InvOSweep}.
\end{proof}


\section{An application}\label{s-apply}

A path encoded by $P^o=(b_1,b_2,\cdots,b_N)$ is called an incomplete Dyck path written by $D^o$ if it has to start at the point $(0,a)$, where
$a$ is equal to $- \sum_{i=1}^N b_i$ and greater than zero, and for all $i$, $a + b_1+ \cdots +b_i \geq 0$ and $a + b_1+ \cdots +b_N = 0$. It is clear to know that its \emph{rank sequence} $R^o=(r_1,r_2,\dots, r_N)$ satisfies the conditions $r_1= a > 0$ and $r_i \geq 0$ for $2 \leq i \leq N$.
Note that if we add a red arrow $A^o$ at the point $(-1, 0)$ where $A^0$ is the vector $(1,a)$, then it becomes a Dyck path
of length $N+1$ starting at $(-1,0)$ and ending at $(N,0)$. So, when we ignore the x-coordinate, an incomplete Dyck path can be
obtained by removing the first step for a general Dyck path of length $N+1$.

In what follows, for a multiset $B=\{b_1,\dots, b_N\}^*$ with $|B|=b_1+\cdots +b_N<0,$  we denote by $\cal D^o_B$ the set of incomplete Dyck paths of type $B$. For an incomplete Dyck path $D^o \in \cal D^o_B$ and its rank sequence $R^o=(r_1,r_2,\dots, r_N)$, if we add a red dashed arrow $(1, r_1)$ before the first step of it, then it will becomes a general Dyck path denoted by $D = \mathcal{A}(D^o)$. We write $D^o = \mathcal{A}^{-1}(D)$.

We give the following theorem.
\begin{theo}
For any multiset $B$ of integers with $|B|<0$, the sweep map $\sweep$ is a bijection 
$$\sweep : \cal D^o_B \rightarrow \cal D^o_B.$$
\end{theo}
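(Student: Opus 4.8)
The plan is to realize $\sweep$ on incomplete Dyck paths as a restriction of an Order sweep map on ordinary general Dyck paths and then to quote Theorem~\ref{t-mainresult}. Put $a=-|B|$, so $a>0$, and let $M$ be the multiset obtained from $B$ by adjoining one copy of $a$; then $|M|=|B|+a=0$. The first task is to identify the image of the injection $\mathcal{A}\colon\cal D^o_B\to\cal D_M$ (injectivity is clear, since deleting the first step of $D=\mathcal{A}(D^o)$ returns $D^o$). Now $\mathcal{A}(D^o)$ always has first step $a$, and conversely a path $D=(c_1,\dots,c_{N+1})\in\cal D_M$ with first step $c_1=a$ has tail $(c_2,\dots,c_{N+1})$ of type $B$, ending at level $0$ because $a+c_2+\cdots+c_{N+1}=|M|=0$, with rank sequence $(c_1,\,c_1+c_2,\,\dots,\,c_1+\cdots+c_N)$, whose entries are nonnegative because $D$ is a Dyck path and whose first entry is $a>0$; hence $(c_2,\dots,c_{N+1})\in\cal D^o_B$ and $D$ is in the image. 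So $\mathcal{A}$ is a bijection from $\cal D^o_B$ onto $\cal S:=\{D\in\cal D_M : D\text{ has first step }a\}$.

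Next, choose the permutation sequence: for each $j\ge 1$ let $\phi_j\in\mathfrak{S}_j$ be the involution fixing $1$ and reversing $2,3,\dots,j$, and set $\phi=(\phi_1,\phi_2,\dots)$. The heart of the argument is the identity $\osweep\circ\mathcal{A}=\mathcal{A}\circ\sweep$ on $\cal D^o_B$, where $\osweep$ is built from this $\phi$. Fix $D^o$, let $D=\mathcal{A}(D^o)$, and let $k$ be the number of level-$0$ arrows of $D$. All ranks in $D$ are $\ge 0$, so neither map uses negative levels; and for each $\ell\ge 1$ the rank-$\ell$ arrows of $D$ are exactly those of $D^o$, in the same left-to-right order, so both maps list them identically. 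At level $0$ the arrows of $D$, left to right, are the prepended arrow $A^o$ of height $a$ (the leftmost arrow of $D$) followed by the level-$0$ arrows of $D^o$ in their original order; since $\phi_k$ keeps $A^o$ first and reverses the remaining $k-1$, $\osweep(D)$ begins with $A^o$ and then lists the level-$0$ arrows of $D^o$ right to left, i.e.\ $A^o$ prepended to the level-$0$ block of $\sweep(D^o)$. As $\sweep$ preserves type, $\mathcal{A}$ prepends this very arrow $A^o$ to $\sweep(D^o)$, so $\osweep(D)=\mathcal{A}(\sweep(D^o))$. In particular $\osweep(D)$ has first step $a$, hence lies in $\cal S$, which makes $\mathcal{A}^{-1}$ applicable and shows simultaneously that $\sweep(D^o)\in\cal D^o_B$ and that $\sweep=\mathcal{A}^{-1}\circ\osweep\circ\mathcal{A}$ on $\cal D^o_B$.

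Finally, Theorem~\ref{t-mainresult} applied to $M$ and $\phi$ gives that $\osweep\colon\cal D_M\to\cal D_M$ is a bijection; the computation above shows that $\osweep$ maps $\cal S$ into $\cal S$ (it fixes the first arrow of every $D\in\cal S$), and since $\cal S$ is finite, an injection of $\cal S$ into itself is a bijection of $\cal S$. Therefore $\sweep=\mathcal{A}^{-1}\circ(\osweep|_{\cal S})\circ\mathcal{A}$ is a composition of bijections $\cal D^o_B\to\cal S\to\cal S\to\cal D^o_B$, so $\sweep$ is a bijection of $\cal D^o_B$. The one step that really needs care is the identity $\osweep\circ\mathcal{A}=\mathcal{A}\circ\sweep$; it is exactly the reason one must take $\phi_j$ fixing the first position rather than the plain ``reverse everything'' order of the ordinary sweep, for which $A^o$ would be swept to the \emph{end} of the level-$0$ block and the identity would fail. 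Everything else is routine.
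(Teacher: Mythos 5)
Your proposal is correct and follows essentially the same route as the paper: the paper's proof consists precisely of the identity $\sweep = \mathcal{A}^{-1}\circ\osweep\circ\mathcal{A}$ with $\phi_k = 1\ k\ k-1\cdots 2$ (the same ``fix $1$, reverse the rest'' order you chose) together with an appeal to Theorem~\ref{t-mainresult}. You simply supply the details the paper leaves implicit --- the verification of the conjugation identity at level $0$ and the observation that $\osweep$ restricts to a bijection of the finite set of paths with first step $a$ --- all of which check out.
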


\begin{proof}
Since $$\sweep = \mathcal{A}^{-1} \circ \osweep \circ \mathcal{A},$$
which the permutation sequence $\phi = (\phi_1, \phi_2,\dots)$ satisfies $\phi_k = 1\ k \ k-1\ \cdots 2$ for all $k$.
It follows by a consequence of Theorem \ref{t-mainresult}.
\end{proof}

Here, we present an illustrative example. 

Consider a given incomplete Dyck path $D^o$. The sweep map image of $D^o$ can be constructed through the following steps:
\begin{description}
  \item[1)] Add a red arrow to $D^o$ to produce a Dyck path, which we denote as $D$;
  \item[2)] Derive the Order sweep map image, $P(T)$, corresponding to $D$;
  \item[3)] Remove the initial step from $P(T)$, thereby yielding the desired incomplete Dyck path.
\end{description}
On the other hand, given an incomplete Dyck path $P^o(T)$, we want to obtain the preimage of $P^o(T)$ under the sweep map. 
It can be constructed through the following steps:
\begin{description}
  \item[1)] Add a red arrow to $P^o(T)$ to produce a Dyck path, which we denote as $P(T)$;
  \item[2)] Derive the Order sweep map preimage, $D$, corresponding to $P(T)$;
  \item[3)] Remove the initial step from $D$, thereby yielding the desired incomplete Dyck path.
\end{description}

\begin{figure}[!ht]
\centering{
\mbox{\includegraphics[height=3.2 in]{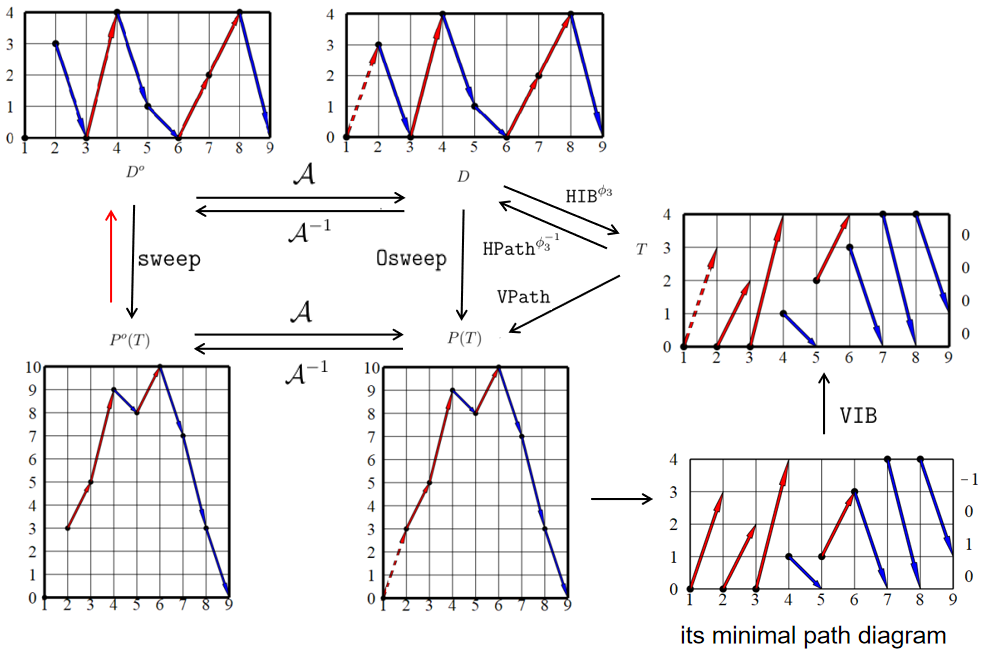}}
\caption{An example of the Sweep map can be obtained by the Order sweep map.}
}
\end{figure}

\begin{theo}
For any multiset $B$ of integers with $|B|<0$ and any sequence of permutations $\phi = (\phi_1, \phi_2, \dots)$, the Order sweep map is a bijection 
$$\osweep : \cal D^o_B \rightarrow \cal D^o_B.$$
\end{theo}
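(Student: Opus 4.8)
The plan is to mimic exactly the reduction used in the previous theorem, replacing the specific permutation sequence $\phi_k = 1\ k\ k-1\ \cdots\ 2$ by an arbitrary sequence $\phi = (\phi_1, \phi_2, \dots)$. The key point is that the correspondence $\mathcal{A}$ between incomplete Dyck paths in $\cal D^o_B$ and a suitable subset of general Dyck paths is a bijection that intertwines the two sweep maps, so bijectivity of $\osweep$ on $\cal D^o_B$ will follow from Theorem \ref{t-mainresult} once we identify the right multiset and the right permutation sequence. First I would fix an incomplete Dyck path $D^o \in \cal D^o_B$ with rank sequence $R^o = (r_1, \dots, r_N)$, $r_1 = a = -\sum b_i > 0$, and form $D = \mathcal{A}(D^o)$ by prepending the red arrow $(1, r_1)$; then $D$ is a general Dyck path of type $M := B \cup \{a\}^*$ (which has $|M| = 0$), and moreover $D$ has exactly one arrow starting at rank $0$ — namely the prepended arrow — because every original step of $D^o$ has starting rank $r_i \ge 0$ with $r_1 = a > 0$ and $r_i$ weakly increasing... wait, the rank sequence of an incomplete path need not be weakly increasing, but crucially no original step starts at level $0$ is false in general; so I must instead observe that the prepended arrow is the \emph{unique} arrow at level $0$ that is relevant, or more carefully, I should track how many arrows of $D$ start at rank $0$.

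Here is the cleaner route. Let $k_0$ be the number of arrows of $D = \mathcal{A}(D^o)$ starting at rank $0$; one of them is the prepended red arrow, which (being placed at $(-1,0)$) sits strictly to the left of all others. I would choose the permutation sequence $\psi = (\psi_1, \psi_2, \dots)$ used for the Order sweep on $\cal D_M$ to be: $\psi_k$ acts as $\phi_{k-1}$ on positions $\{2, \dots, k\}$ while fixing position $1$ — that is, $\psi_k = 1 \oplus \phi_{k-1}$ in the natural shifted sense — mirroring the pattern $\phi_k = 1\ k\ k-1\ \cdots\ 2$ from the previous theorem, which is precisely $1 \oplus (\text{reversal on } k-1 \text{ letters})$, the sweep order on the incomplete path's level-$0$ arrows. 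With this choice, applying $\osweep^{\psi}$ to $D$ sorts all arrows by starting rank, and among the $k_0$ arrows at level $0$ it keeps the prepended arrow first (since $\psi$ fixes position $1$) and orders the remaining $k_0 - 1$ arrows — which are exactly the level-$0$ arrows of $D^o$ — by $\phi_{k_0 - 1}$. Removing the first step of $\osweep^{\psi}(D)$ then yields exactly $\osweep^{\phi}(D^o)$ as an incomplete Dyck path in $\cal D^o_B$. This is the identity $\osweep^{\phi} = \mathcal{A}^{-1} \circ \osweep^{\psi} \circ \mathcal{A}$.

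Next I would verify that $\mathcal{A}$ and $\mathcal{A}^{-1}$ are mutually inverse bijections between $\cal D^o_B$ and the set of general Dyck paths in $\cal D_M$ whose first arrow is the minimal-rank (level $0$) arrow — equivalently, those $D$ for which deleting the first step leaves a valid incomplete Dyck path. Since $\osweep^{\psi}$ is a bijection on all of $\cal D_M$ by Theorem \ref{t-mainresult}, the only thing left is to check that it preserves this distinguished subset, i.e. that if $D$ has its prepended-type arrow first, so does $\osweep^{\psi}(D)$. But the arrow $A^o = (1, a)$ with $a > 0$ and starting rank $0$ is always sorted to the front by step (1) of the Order sweep, and $\psi_{k_0}$ fixes position $1$ by construction, so $A^o$ remains the first arrow of the image; hence the image again lies in the distinguished subset and is $\mathcal{A}$ of an incomplete Dyck path in $\cal D^o_B$. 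Combining, $\osweep^{\phi}$ on $\cal D^o_B$ is conjugate via the bijection $\mathcal{A}$ to the restriction of the bijection $\osweep^{\psi}$ to an $\osweep^{\psi}$-stable subset, hence is itself a bijection.

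The main obstacle I anticipate is the bookkeeping around arrows of $D$ that start at rank $0$: one must confirm that the level-$0$ arrows of $D = \mathcal{A}(D^o)$ other than $A^o$ correspond precisely to the level-$0$ arrows of $D^o$ (they do, since prepending $A^o$ and shifting the $x$-coordinates does not change any original arrow's starting level), and that the permutation $\psi_{k_0}$ restricted to positions $\{2, \dots, k_0\}$ genuinely realizes $\phi_{k_0-1}$ acting on those arrows in their left-to-right order within $D$ — here one should note that the left-to-right order of the original level-$0$ arrows inside $D$ agrees with their order inside $D^o$, so no reindexing subtlety arises. Once this correspondence of level-$0$ arrows and their orderings is nailed down, the conjugation identity and hence the theorem follow immediately from Theorem \ref{t-mainresult}.
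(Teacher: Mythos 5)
Your proposal is correct and follows essentially the same route as the paper: prepend the red arrow $A^o=(1,a)$ via $\mathcal{A}$, replace $\phi$ by the shifted sequence fixing position $1$ (your $\psi_k = 1\oplus\phi_{k-1}$ is exactly the paper's $\phi_i' = 1\,\sigma_1'\cdots\sigma_i'$ with $\sigma_j'=\sigma_j+1$), and invoke Theorem \ref{t-mainresult} together with the observation that the distinguished subset $\mathcal{A}(\cal D^o_B)$ is stable under $\osweep^{\psi}$. Your write-up is in fact somewhat more careful than the paper's about why the prepended arrow stays first in the image and why stability plus finiteness yields bijectivity on the subset.
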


\begin{proof}
Suppose that $\phi_i = \sigma_1\sigma_2\cdots\sigma_i \in \mathfrak{S}_i$.
For every incomplete Dyck path $D^o \in \cal D^o_B$, we can add a red arrow $(1, -|B|)$ to $D^o$ to create a Dyck path $D$, expressed as $D = \mathcal{A}(D^o)$.
The Order sweep map image of $D^o$ with permutations $\phi= (\phi_1, \phi_2, \dots)$ is identical to the Order sweep map image of $D$ with permutations $\phi'= (\phi'_1, \phi'_2, \dots)$, where $\phi_i' = 1\sigma_1'\sigma_2'\cdots\sigma_i'$ with $\sigma_j' = \sigma_j + 1$ for all $j$. It follows since the Order sweep map with permutations $\phi'$ is a bijection on $\cal D'^o_B$, defined as $\cal D'^o_B = \{D = \mathcal{A}(D^o)| D^o \in \cal D^o_B\}$.
\end{proof}

\section{Concluding remark}

In this paper, we delineate a class of Order sweep maps applicable to general paths and rigorously establish that these maps are bijections for general Dyck paths. This classification subsumes the sweep maps for both standard and incomplete general Dyck paths as specific instances. Moreover, we confirm that the Order sweep map maintains its bijection property for incomplete general Dyck paths.

It is acknowledged that the seminal work of Thomas-Williams has already successfully inverted the sweep map for the broader category of non-Dyck paths. In light of this, we will investigate the bijection status of the Order sweep map within this more generalized context in a subsequent study.

\end{document}